\newcommand{\Div}{\operatorname{div}}
\newcommand{\varepsilonh}{\varepsilon_h}
\newcommand{\HatDiv}{\hat{\operatorname{div}}}
\newcommand{\HatNab}{\hat{\nabla}}
\newcommand{\hatv}{\hat{v}}
\newcommand{\hattau}{\hat{\tau}}
\newcommand{\bmx}{\mathbf{x}}
\newcommand{\bbT}{\mathbb{T}}
\newcommand{\bbS}{\mathbb{S}}
\newcommand{\bbR}{\mathbb{R}}
\newcommand{\bbX}{\mathbb{X}}
\newcommand{\bbRn}{\mathbb{R}^2}
\newcommand{\bbRnn}{\mathbb{R}^{2\times 2}}
\newcommand{\bbAm}{\mathbb{A}^m}
\newcommand{\maA}{\mathcal{A}}
\newcommand{\maO}{\mathcal{O}}
\newcommand{\Tr}{\operatorname{tr}}
\newcommand{\bdI}{I}
\newcommand{\idI}{\mathrm{I}}
\newcommand{\hatsigma}{\hat{\sigma}}
\newcommand{\hatu}{\hat{u}}
\newcommand{\hatsigmah}{\hat{\sigma}_h}
\newcommand{\hatuh}{\hat{u}_h}
\newcommand{\hattauh}{\hat{\tau}_h}
\newcommand{\hatvh}{\hat{v}_h}
\newcommand{\hatphih}{\hat{\phi}_h}
\newcommand{\Vhsm}{V_h^{m,*}}
\newcommand{\Thm}{\mathcal{T}_h^m}
\newcommand{\Thl}{\mathcal{T}_h^l}
\newcommand{\Tho}{\mathcal{T}_h^1}
\newcommand{\Th}{\mathcal{T}_h}
\newcommand{\TPhm}{\mathcal{T}_{\partial,h}^m}
\newcommand{\TPho}{\mathcal{T}_{\partial,h}^1}
\newcommand{\TIho}{\mathcal{T}_{0,h}^1}
\newcommand{\TIhm}{\mathcal{T}_{0,h}^m}
\newcommand{\Zhm}{\mathcal{Z}_h^m}
\newcommand{\Km}{K^m}
\newcommand{\Kl}{K^l}
\newcommand{\Ko}{K^1}
\newcommand{\El}{E^l}
\newcommand{\Em}{E^m}
\newcommand{\Eh}{\mathcal{E}_h}
\newcommand{\Ehm}{\mathcal{E}_h^m}
\newcommand{\EPhm}{\mathcal{E}_{\partial,h}^m}
\newcommand{\EPhl}{\mathcal{E}_{\partial,h}^l}
\newcommand{\EIhm}{\mathcal{E}_{0,h}^m}
\newcommand{\EPh}{\mathcal{E}_{\partial,h}}
\newcommand{\EIh}{\mathcal{E}_{0,h}}
\newcommand{\Fhm}{\mathcal{E}^m_{h}}
\newcommand{\FKm}{F_K^m}
\newcommand{\FKl}{F_K^l}
\newcommand{\FKo}{F_K^1}
\newcommand{\Fm}{F^m}
\newcommand{\Philm}{\Phi^{lm}}
\newcommand{\PhiKlm}{\Phi_K^{lm}}
\newcommand{\PhiKom}{\Phi_K^{1m}}
\newcommand{\PsiKm}{\Psi_K^m}
\newcommand{\Psim}{\Psi^m}
\newcommand{\Qh}{Q_h}
\newcommand{\Pih}{\Pi_h}
\newcommand{\Piho}{\Pi_h^1}
\newcommand{\Sigh}{\Sigma_h}
\newcommand{\Vh}{V_h}
\newcommand{\Sighm}{\Sigma_h^m}
\newcommand{\Vhm}{V_h^m}
\newcommand{\Sigho}{\Sigma_h^1}
\newcommand{\Vho}{V_h^1}
\newcommand{\vh}{v_h}
\newcommand{\sigmah}{\sigma_h}
\newcommand{\tauh}{\tau_h}
\newcommand{\uh}{u_h}
\newcommand{\hatush}{\hat{u}^*_h}
\newcommand{\Omo}{{\Omega^1}}
\newcommand{\Omm}{{\Omega^m}}
\newcommand{\Oml}{{\Omega^l}}
\newcommand{\OmmS}{{\Omega^m_S}}
\newcommand{\OmlS}{{\Omega^l_S}}
\newcommand{\aam}{{a^m}}
\newcommand{\bbm}{{b^m}}
\newcommand{\bbo}{{b^1}}
\title{The Hu-Zhang element for linear elasticity on curved domains\thanks{Submitted to the editors August 28, 2025. \funding{The third author was supported by the National Natural Science Foundation of China Grants No. NSFC 12288101.}}}
\author{Wei Chen\thanks{LMAM and School of Mathematical Sciences, Peking University, Beijing 100871, P. R. China~\&~Chongqing Research Institute of Big Data, Peking University, Chongqing 401329, P. R. China (\email{2406397052@pku.edu.cn}).}
\and Xinyuan Du\thanks{LMAM and School of Mathematical Sciences, Peking University, Beijing 100871, P. R. China (\email{2301110045@stu.pku.edu.cn},
\email{hujun@math.pku.edu.cn}).}
\and Jun Hu\footnotemark[3]
}
\begin{document}
\maketitle

\begin{tcbverbatimwrite}{tmp_\jobname_abstract.tex}
\begin{abstract}
This paper extends the Hu-Zhang element for linear elasticity to curved domains, preserving strong symmetry and $H(\Div)$-conformity.
The non-polynomial structure of the curved Hu-Zhang element makes it difficult to analyze the stability, which is overcome by establishing a novel inf-sup condition.
Optimal convergence rates are achieved for all variables except for the stress in the $L^2$-norm.
This suboptimality originates from the fact that the divergence space of the curved Hu-Zhang element is not contained in the discrete displacement space, which is improved by local~$p$-enrichment on boundary elements. 
Two numerical experiments validate the theoretical results.
\end{abstract}

\begin{keywords}
{linear elasticity, Hu-Zhang element, parametric finite element, inf-sup condition, convergence analysis}
\end{keywords}

\begin{MSCcodes}
65N12, 65N30, 74S05
\end{MSCcodes}
\end{tcbverbatimwrite}
\input{tmp_\jobname_abstract.tex}

\section{Introduction}
\label{intro}
Let $\Omega\subset\bbRn$ be a bounded connected domain with piecewise~$C^{r+1}$ boundary~$\partial\Omega$. Given a load~$f\in L^2(\Omega;\bbRn)$, consider the Hellinger-Reissner formulation for the linear elasticity problem with homogeneous displacement boundary condition: find $(\sigma, u)\in\Sigma \times V:=H(\Div,\Omega; \bbS)\times L^2(\Omega; \bbRn)$, such that
\begin{equation}\label{eq1}
\begin{cases}
a(\sigma,\tau) + b(\tau,u)=0, &\text{for all}~\tau\in\Sigma,\\
b(\sigma,v)=-(f,v),&\text{for all}~ v\in V,
\end{cases}
\end{equation}
where the bilinear forms are defined by
\begin{equation}\label{abdefine}
a(\sigma,\tau):=(\maA\sigma,\tau),\quad
b(\tau,v):=(\Div\tau,v),
\end{equation}
and $\bbS$ denotes the space of symmetric matrices of~$\bbRnn$.
The compliance tensor~$\maA=\maA(x):\bbS\to\bbS$,
characterizing material elastic properties, is bounded and symmetric positive definite uniformly for $x\in\Omega$.
For isotropic materials,~$\maA$ takes the form:
\begin{equation*}
\maA\sigma:=\frac{1}{2\mu}\left(\sigma-\frac{\lambda}{2\lambda+2\mu}(\Tr\sigma)\bdI\right).
\end{equation*}
Here, $\bdI$ is the $2\times 2$ identity matrix and $\Tr(\cdot)$ denotes the trace operator.
The Lam\'{e} constants $\lambda$ and~$\mu$ are positive material parameters.

Constructing stable mixed finite element spaces for~\eqref{eq1} is a long-standing and challenging problem~\cite{arnold2002mixed,arnold2003nonconforming}.
Early developments in this direction include composite elements~\cite{arnold1984family,1978Some,Watwood1968An} and weakly symmetric approaches~\cite{1979Equilibrium,1984PEERS,1989A,1988A}. In \cite{arnold2002mixed}, Arnold and Winther designed the first family of mixed finite element methods in two dimensions, based on polynomial shape function spaces.
From then on, various stable mixed elements have been constructed, see, e.g., \cite{2005A,arnold2005rectangular,arnold2008finite,arnold2007mixed,arnold2002mixed,arnold2003nonconforming,awanou2012two,boffi2009reduced,chen2011conforming,cockburn2010new,gopalakrishnan2011symmetric,gopalakrishnan2012second,guzman2010unified,hu2014simple,hu2008lower,man2009lower,yi2005nonconforming,yi2006new}.
Recently, Hu \cite{hu2015finite} proposed a family of conforming mixed elements on simplicial meshes in any dimension, with specific implementations in 2D \cite{hu2014family} and 3D \cite{hu2015family}.
This new class of elements requires much fewer degrees of freedom than those in the earlier literature. 
For $k\geq n+1$, the stress is discretized by piecewise~$P_k$ polynomials with $H(\Div)$-conformity and the displacement by piecewise $P_{k-1}$ polynomials.
Moreover, a new idea is proposed to analyze the discrete inf-sup condition, and the basis functions therein are easy to obtain. 
For the case with~$1\leq k\leq n$, the symmetric matrix-valued finite element space is enriched by proper high-order $H(\Div)$ bubble functions to stabilize the discretization in~\cite{hu2016finite}. Another method by stabilization techniques to deal with this case can be found in \cite{chen2017stabilized,chen2018fast,verfurth1986finite}. Corresponding mixed elements on both rectangular and cuboid meshes are constructed in \cite{hu2015new}.

In practical engineering analyses of curved domains, the consistency error induced by polygonal boundary approximations fundamentally limits the convergence rates of finite element methods.
For second-order elliptic equations discretized with $k$-th order Lagrange finite element methods on meshes of geometric approximation order~$m$, the $H^1$-error between the numerical solution and the exact solution is dominated by two key parts: the error of the geometric approximation~$\maO(h^m)$ and the error of the polynomial approximation~$\maO(h^k)$. 
Rigorous proofs for this error estimate can be found in \cite{bernardi1989optimal,ciarlet1972interpolation,lenoir1986optimal,scott1973finite}.
To guarantee optimal convergence rates, $m$ must equal or exceed~$k$~(isoparametric or superparametric approximation).
Although curved Lagrange finite element methods are mature \cite{Brenner2013Isoparametric,brenner2008mathematical,Caubet2024APriori,Song2021Second}, the application of mixed formulation to linear elasticity on curved domains remains scarce.
By imposing Piola's transformation \cite[(2.1.69)]{boffi2013mixed}, Qiu and Demkowicz \cite{Qiu2011Mixed} generalized the weakly symmetric Arnold-Falk-Winther element \cite{arnold2007mixed} to curved domains.
Through extending polytopal template methods \cite{sky2024polytopal,sky2024higher}, Sky, Neunteufel, Hale and Zilian \cite{Sky2023AReissner} developed a novel fourth-order tensor-based polytopal transformation mapping Hu-Zhang element basis functions from reference to physical elements.
Such approach is also applicable to curved elements.
For other problems with mixed finite element methods, we refer readers to~\cite{arnold2020hellan,bae2004finite,Bertrand2014First,Bertrand2016Parametric,dione2015penalty,neilan2021divergence,verfurth1986finite,Walker2022The} and the references therein.

This paper generalizes the Hu-Zhang element to curved triangulations while preserving strong symmetry and $H(\Div)$-conformity.
The loss of the polynomial nature of the curved Hu-Zhang element introduces significant challenges in analyzing the discrete inf-sup condition and establishing error estimates between the numerical solution and the exact solution.
First, this non-polynomial structure makes it difficult to characterize the divergence space of $H(\Div)$ bubble function space, which is a critical step in proving the discrete inf-sup condition on simplicial meshes as established in~\cite{hu2015finite,hu2014family,hu2015family}.
Second, the deviation from polynomials implies that the divergence space of the curved Hu-Zhang element is not contained in the discrete displacement space. 
This severely complicates the error estimate of stress in the~$L^2$-norm through standard arguments (such as those for simplicial meshes in~\cite[Remark 3.1]{hu2015finite}),
particularly when the error of the geometric approximation coexists.

To overcome the first difficulty, we establish a novel discrete inf-sup condition on triangular meshes by substituting the $H(\Div)$-norm of discrete stress with a broken~$H^1$-norm.
This serves as a key step to recasting $H(\Div)$-stability analysis on curved meshes as broken $H^1$-stability analysis on triangular meshes.
The second difficulty is resolved by extending the stability in mesh-dependent norms for simplicial meshes, originally established in \cite{chen2018fast}, to curved meshes.
Moreover, superconvergence results for the displacement follow directly from this extended stability.
It is noteworthy that the failure of the inclusion~(i.e., $\Div\Sighm\not\subset\Vhm$) introduces an error term in analyzing both $L^2$-error of stress and superconvergence of displacement, leading to a half-order reduction from the optimal convergence rate.
Crucially, this degradation is mitigated by locally increasing the polynomial degree of the curved Hu-Zhang element. This degree enhancement applies exclusively to boundary elements, only leading to a marginal increase in the total computation cost.

The rest of this paper is organized as follows. Section \ref{notation} introduces
some notations.
Section \ref{tirhz} briefly reviews the Hu-Zhang element on triangular meshes, and Section \ref{CTR} provides a quick review of curved triangulations. 
The extension of the Hu-Zhang element to curved meshes and the proof of the discrete inf-sup condition are presented in Section~\ref{curvehz}, followed by error estimates in Section~\ref{erroranalysis}. Finally, Section \ref{numericres} provides two numerical tests validating the theoretical analysis.

\subsection{Notations}\label{notation}
For a bounded domain $G\subset\bbRn$ and a nonnegative integer $m$, denote the Sobolev space
\begin{equation*}
H^m(G;\bbX):=\{\phi:G\to\bbX~:~D^\alpha\phi\in L^2(G;\bbX),~|\alpha|\leq m\},
\end{equation*}
where $\bbX$ is a finite-dimensional vector space ($\bbX=\bbR$, $\bbRn$ or $\bbS$).
The associated norm and semi-norm are denoted by $\|\cdot\|_{m,G}$ and $|\cdot|_{m,G}$, respectively.
Further, for a non-integer~$s$, let $H^s(G;\bbX)$ be the intermediate space with norm $\|\cdot\|_{s,G}$ by interpolating Sobolev spaces of integer order. 
The $L^2$-inner product on $G$ is written as $(\cdot,\cdot)_G$. When $G=\Omega$, we simplify notations:~$\|\cdot\|_m:=\|\cdot\|_{m,\Omega}$, $|\cdot|_m:=|\cdot|_{m,\Omega}$, $\|\cdot\|_s:=\|\cdot\|_{s,\Omega}$ and $(\cdot,\cdot):=(\cdot,\cdot)_\Omega$. Let $H^m_0(G;\bbRn)$ be the closure of $C^\infty_0(G;\bbRn)$ with respect to the norm $\|\cdot\|_{m,G}$.
Define the Hilbert space for symmetric matrix-valued functions with square-integrable divergence:
\begin{equation}\label{defHdiv}
H(\Div,G;\bbS):=\{\tau\in L^2(G;\bbS)~:~\Div\tau\in L^2(G;\bbR^2)\},
\end{equation}
equipped with the following norm:
\begin{equation}\label{defHdivnorm}
\|\tau\|_{H(\Div,G)}:=\left(\|\tau\|_{0,G}^2+\|\Div\tau\|_{0,G}^2\right)^{\frac{1}{2}},\quad \text{for all}~ \tau\in H(\Div,G;\bbS).
\end{equation}

Suppose $\Omega$ is subdivided by a family of shape regular triangulations $\Th$ with~$h:=\max_{K\in\Th}h_K$ and $h_K:=\operatorname{diam}(K)$.
Let $\Eh$ be the set of all edges in~$\Th$, partitioned into boundary edges~$\EPh$ and interior edges~$\EIh$.
For any~$E\in\Eh$, define $h_E:=\operatorname{diam}(E)$ and fix a unit normal vector~$\nu_E$.
For~$\bbX\in\{\bbR,\bbRn,\bbS\}$, denote by $P_m(G;\bbX)$ the space of polynomials of degree~$\leq m$ over~$G$ with values in $\bbX$.
Let $H^1(\Th;\bbX)$ be the broken Sobolev space on $\Th$:
\begin{equation}\label{brokenH1}
    H^1(\Th;\bbX):=\bigg\{v\in L^2(\Omega;\bbX)~:~\sum_{K\in\Th}\big(\|v\|_{0,K}^2+|v|_{1,K}^2\big)<\infty\bigg\},
\end{equation}
equipped with the element-wise $H^1$-norm:
\begin{equation}\label{defbreakH1}
\|v\|_{1,\Th}:=\bigg(\sum_{K\in\Th}\left(\|v\|_{0,K}^2+|v|_{1,K}^2\right)\bigg)^{\frac{1}{2}},\quad \text{for all}~v\in H^1(\Th;\bbX).
\end{equation}

For two adjacent triangles $K^+, K^-\in\Th$ sharing an interior edge $E\in\EIh$, let~$\nu^+, \nu^-$ be the unit outwards normal vectors to the common edge $E$ of $K^+$ and~$K^-$, respectively.
For a vector-valued function $w$, write $w^+:=w|_{K^+}$ and~$w^-:=w|_{K^-}$. Then define the jump of $w$ as
\begin{equation}\label{defjump}
    [\![w]\!]:=\begin{cases}
    w^+(\nu^+\cdot\nu_E)+w^-(\nu^-\cdot\nu_E),&\text{ if } E\in\EIh,\\
    w,&\text{ if } E\in\EPh.
\end{cases}
\end{equation}
Throughout this paper, the notation $A\lesssim B$ denotes $A\leq CB$, where $C>0$ is a constant independent of the mesh size $h$. Similarly, $A\approx B$ signifies $A\lesssim B$ and~$B\lesssim A$.

\section{The Hu-Zhang element method}\label{tirhz}
In the construction of the Hu-Zhang element space, the characterization of bubble functions constitutes the pivotal step.
For each $K\in\Th$, define the local $H(\Div)$-conforming bubble function space:
\begin{equation}
\label{HdivbubbleK1}
\Sigma_{k, b}(K):=\{\tau \in P_k(K;\mathbb{S})~:~ \tau\cdot\nu|_{\partial K}=0\}.
\end{equation}
where $\nu$ is the unit outwards normal vector to $\partial K$.
Denote the vertices of $K$ by $\bmx_{K,0},\bmx_{K,1}$ and $\bmx_{K,2}$~(abbreviated as $\bmx_i$ when unambiguous). For each edge~$\bmx_i\bmx_j (i\neq j)$, let $t_{i,j}$ be the unit tangent vector along the edge. Define the symmetric matrices:
\begin{equation*}
    \bbT_{i,j}:=t_{i,j}t_{i,j}^T,\quad 0\leq i<j\leq 2.
\end{equation*}
As shown in \cite{hu2015finite,hu2014family}, the set $\{\bbT_{i,j}\}_{0\leq i<j\leq2}$ forms a basis for $\bbS$, and
\begin{equation}\label{HdivbubbleK1discret}
    \Sigma_{k, b}(K) = \sum_{0\leq i<j\leq 2}\lambda_i\lambda_j P_{k-2}(K;\bbR)\bbT_{i,j},
\end{equation}
where $\lambda_i$ is the associated barycentric coordinate corresponding to $\bmx_i$ for~$i=0,1,2$.
The global Hu-Zhang element space $\Sigh$ is defined as:
\begin{equation}
\label{H(div)T1}
\begin{aligned}
\Sigh:=\big\{\sigma \in &H(\Div, \Omega; \mathbb{S})~:~ \sigma=\sigma_{c}+\sigma_{b}, \sigma_{c} \in H^{1}(\Omega ; \mathbb{S}), \\
\sigma_{c}&|_{K} \in P_{k}(K ; \mathbb{S}),\sigma_{b}|_{K} \in \Sigma_{k, b}(K), ~\text{for all}~K \in \Th\big\},
\end{aligned}
\end{equation}
and the displacement space $\Vh$ as:
\begin{equation}
\label{2.4}
\Vh:=\left\{v \in L^{2}(\Omega; \mathbb{R}^{2})~:~v|_K \in P_{k-1}(K; \mathbb{R}^{2}),~\text{for all}~ K \in \Th\right\}.
\end{equation}
The Hu-Zhang element method for \eqref{eq1} seeks $(\sigmah,u_h)\in\Sigh\times\Vh$, such that
\begin{equation}\label{eq2}
\begin{cases}
a(\sigmah,\tauh) + b(\tauh,u_h)=0, &\text{for all }\tauh\in\Sigh,\\
b(\sigmah,\vh)=-(f,\vh),&\text{for all } \vh\in \Vh,
\end{cases}
\end{equation}
where $a(\cdot,\cdot)$ and $b(\cdot,\cdot)$ are defined in \eqref{abdefine}.
The error estimate of the Hu-Zhang element method is as follows \cite{hu2015finite,hu2014family}:
\begin{equation*}
\begin{aligned}
    \|\sigma-\sigmah\|_{0,\Omega}+h\|\sigma-\sigmah\|_{H(\Div,\Omega)}&+h\|u-\uh\|_{0,\Omega}\lesssim h^{k+1}(\|\sigma\|_{k+1,\Omega}+\|u\|_{k,\Omega}),
\end{aligned}
\end{equation*}
provided that $\sigma$ and $u$ are smooth enough.

Central to these results is the discrete inf-sup condition \cite{hu2015finite,hu2014family}, which guarantees: for any $\vh\in\Vh$, there exists a $\tauh\in\Sigh$ satisfying 
\begin{equation}\label{oldStablity}
\Div\tauh=\vh\,\text{ and }\,\|\tauh\|_{H(\Div,\Omega)}\lesssim\|\vh\|_{0,\Omega}.
\end{equation}
In this paper, a new inf-sup condition is established: there exists a $\tauh\in\Sigh$ satisfying~$\Div\tauh=\vh$ and $\|\tauh\|_{1,\Th}\lesssim\|\vh\|_{0,\Omega}$, 
which is the critical component in analyzing the discrete inf-sup condition for the curved Hu-Zhang element method.
The complete statement and rigorous proof are deferred to Section~\ref{diseretinfsupcondtion}.

\section{Triangulations with curved elements}\label{CTR} 
When the physical domain $\Omega$ has a curved boundary, the direct application of the Hu-Zhang element method on polygonal approximations of $\Omega$ (via triangular meshes) to solve the linear elasticity problem~\eqref{eq1} results in suboptimal convergence rates for both stress and displacement.
The loss in accuracy arises from the error of the geometric approximation inherent in piecewise linear representations of the boundary $\partial\Omega$.
To recover the optimal convergence rate, we employ higher-order geometric approximations of $\Omega$ through curved elements on the boundary.
 
We revisit the parametric framework for constructing a curvilinear triangulation~$\Thm$ ($m\geq1$), such that $\Omm:=\cup_{K\in\Thm}K$ is an approximate domain to~$\Omega$ with order $m$, as established in \cite{lenoir1986optimal} and described in \cite{arnold2020hellan}.
The methodology begins with a conforming, shape-regular, straight-edged triangulation~$\Tho$ with interior vertices belonging to $\Omega$ and boundary vertices belonging to~$\partial\Omega$.
Denote by $\TPho$ the set of boundary triangles containing at least one edge in~$\partial\Omo$.
We impose the following assumption.

\begin{Hy}
\label{H3.1}
Each triangle in $\Tho$ has at most two vertices on the boundary~$\partial\Omega$, and so it has at most one edge in $\partial\Omo$.
\end{Hy}

Next, for each $\Ko\in\Tho$, we define a polynomial diffeomorphism~$\FKm:\Ko\to\bbRn$ of degree $m$ that maps $\Ko$ onto a curvilinear triangle $\Km$.
This mapping is constructed as follows. All interior edges remain fixed, preserving conformity.
The boundary edge is determined by interpolating a parametric chart of $\partial\Omega$.
The interior points of $\Km$ are defined via the barycentric coordinate interpolation.
See \cite[eq.(14)]{lenoir1986optimal} for an explicit formula of~$\FKm\circ \widehat{\FKo}$, where $\widehat{F_\Ko}$ is the affine mapping from the reference triangle to $\Ko$.
The mapping~$\FKm$ satisfies the optimal~$L^\infty$-bound on its derivatives, as established in~\cite[Theorems 1-2]{lenoir1986optimal}. 
Furthermore, the curvilinear triangulation
\begin{equation*}
\Thm:=\{\Km=\FKm(\Ko)~:~\Ko\in\Tho\}
\end{equation*}
is a conforming, shape-regular triangulation of $\Omm$.
Extending the notations for~$m=1$ in Section \ref{tirhz}, denote by $\Ehm$ the set of all edges in triangulation~$\Thm$, partitioned into interior edges $\EIhm$ (all straight) and boundary edges~$\EPhm$~(possibly curved). 
Thus,~$\partial\Omm:=\bigcup_{\Em\in \EPhm}\Em$ is an approximate boundary to $\partial\Omega$ with order $m$.
The parametric mapping satisfies three conditions: 
1. $F_{K}^{1} \equiv \idI_{\Ko}$; 
2. if $\Ko$ has no edge on~$\partial\Omo$, then $\FKm \equiv \idI_{\Ko}$;
and 3. $\left.\FKm\right|_{E}=\left.\idI_{\Ko}\right|_{E}$ for all interior edges $E\in\EIhm$.
Denote by~$\TPhm$ the set of all boundary triangles and by~$\TIhm$ the set of all interior triangles.
Let $\OmmS:=\bigcup_{\Km\in\TPhm}\Km$ be the union of all boundary triangles.
It satisfies the inclusion 
\begin{equation*}
\OmmS\subset \Omega^m_h:=\{x\in\Omm~:~\exists\, y\in\partial\Omm, ~\operatorname{dist}(x,y)\leq h\},
\end{equation*}
where $\operatorname{dist}(x,y)$ is the distance between $x$ and $y$. Thus, by \cite[Lemma 2.2]{MR1313147}, the following inequality holds for $0\leq s \leq \frac{1}{2}$:
\begin{equation}\label{OmmSBys}
    \|v\|_{0,\OmmS}\lesssim h^s\|v\|_{s,\Omm},\quad \text{for all}~v\in H^s(\Omm).
\end{equation}

The local polynomial diffeomorphisms $\{\FKm\}_{\Ko\in\Tho}$ naturally induce a global piecewise polynomial mapping:
\begin{equation}\label{defFm}
\Fm:\Omo\to\Omm,\quad \Fm|_{\Ko}:=\FKm, \quad \text{for all}~\Ko\in\Tho.
\end{equation}
Furthermore, for $1\leq l\leq m<\infty$, we define the mapping between two approximate domains:~$\Philm:\Oml\to\Omm$, constructed piecewise by
\begin{equation}\label{PhilmDef}
\Philm|_{\Kl}:=\PhiKlm:=\FKm\circ(\FKl)^{-1}, \quad \text{for all}~\Kl\in\Thl.
\end{equation}

To compare the exact solution defined on the physical domain $\Omega$ and the numerical solution defined on the approximate domain $\Omm$, we establish an element-wise mapping from $\Omm$ to $\Omega$.
Specifically, for each element~$\Km\in\Thm$, we construct a diffeomorphic mapping~$\PsiKm:\Km\to\bbRn$ that transforms the parametric triangle $\Km$ into a curvilinear triangle $K$ exactly fitting $\Omega$.
Following the framework in \cite{lenoir1986optimal}, we construct this mapping as follows. For interior elements without boundary edges, the mapping reduces to the identity transformation.
For boundary elements containing an edge~$\Em\subset\partial\Omm$, the mapping is explicitly defined through the parametric equations in~\cite[eq.\,(32)]{lenoir1986optimal}, ensuring exact geometric matching.
This element-wise construction generates a curvilinear triangulation $\Th:=\{\PsiKm(\Km)\}_{\Km\in\Thm}$ exactly triangulating~$\Omega$. The local mappings $\PsiKm$ can be aggregated through a partition-of-unity approach to form a global mapping:
\begin{equation}\label{defPsim}
\Psim:\Omm\to\Omega,\quad \Psim|_{\Km}:=\PsiKm, \quad \text{for all}~ \Km\in\Thm.
\end{equation}

The physical domain $\Omega$ and its triangulation $\Th$ can be conceptually framed as the limiting case of $\{\Omm\}_{m\geq 1}$ and $\{\Thm\}_{m\geq1}$ as $m\to\infty$, respectively.
This motivates the adoption of alternative notations: $\Omega^{\infty}\equiv\Omega$, $\Th^\infty\equiv\Th$, $\Phi^{l\infty}\equiv\Psi^l$, $F_K^\infty\equiv\Psi^1$, etc., which will sometimes be convenient.

The following theorem, adapted from \cite[Theorem 3.2]{arnold2020hellan}, serves as the fundamental framework for both stability analysis and error estimates in the curved Hu-Zhang element method.
\begin{theorem}
\label{thcur}
Assume Hypothesis \ref{H3.1}, then for all $1\leq l \leq m\leq r$ and $m=\infty$, 
mappings $\Philm$ described above satisfy
\begin{equation}\label{curvePhi}
\begin{aligned}
 \|\nabla^s(\PhiKlm-\idI_{\Kl})\|_{L^\infty(\Kl)}&\lesssim h^{l+1-s},\quad \text{for all}~ 0\leq s \leq l+1,\\
 \|\nabla^s((\PhiKlm)^{-1}-\idI_{\Km})\|_{L^\infty(\Km)}&\lesssim h^{l+1-s},\quad \text{for all}~ 0\leq s \leq l+1, 
\end{aligned}
\end{equation}
where all constants depend on the piecewise $C^{r+1}$ norm of $\partial\Omega$.
\end{theorem}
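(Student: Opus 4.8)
The plan is to reduce both families of estimates to a single approximation bound on the fixed straight element $\Ko$ and then transport it by the chain rule. The starting observation is the algebraic identity
\begin{equation*}
(\PhiKlm-\idI_{\Kl})\circ\FKl=\FKm-\FKl\quad\text{on }\Ko,
\end{equation*}
which is immediate from $\PhiKlm=\FKm\circ(\FKl)^{-1}$. Hence the entire behaviour of $\PhiKlm-\idI_{\Kl}$ is dictated by the difference $\FKm-\FKl$ of the two parametric mappings. Because the paper sets $F_K^\infty\equiv\Psi^1$, the limiting case $m=\infty$ is covered by the same identity with $\FKm$ replaced by the exact geometry map $\Psi^1$, so no separate argument is needed for it.

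The core step I would carry out is the approximation estimate
\begin{equation*}
\|\nabla^s(\FKm-\FKl)\|_{L^\infty(\Ko)}\lesssim h^{l+1-s},\qquad 0\le s\le l+1,
\end{equation*}
valid for $1\le l\le m\le r$ and for $m=\infty$. I would prove it by inserting the exact map and splitting $\FKm-\FKl=(\Psi^1-\FKl)-(\Psi^1-\FKm)$. Each bracket is the remainder of the Lenoir degree-$l$, respectively degree-$m$, interpolant of $\Psi^1$, so by the optimal $L^\infty$ derivative bounds of \cite[Theorems 1--2]{lenoir1986optimal} the first is $\maO(h^{l+1-s})$ and the second $\maO(h^{m+1-s})$; since $l\le m$ the lower index dominates, which is precisely why the exponent $l+1-s$ features the smaller degree. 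The constants here inherit the dependence on the piecewise $C^{r+1}$ norm of $\partial\Omega$, entering through the $(l+1)$-st derivatives of the boundary chart that $\Psi^1$ encodes.

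To pass from $\Ko$ to $\Kl$ I would differentiate the identity above by Fa\`a di Bruno's formula. Shape regularity together with the Lenoir bounds give $\|\nabla^t\FKl\|_{L^\infty}\lesssim 1$ and $\|\nabla^t(\FKl)^{-1}\|_{L^\infty}\lesssim 1$ for every $t\ge1$, so each term produced by the chain rule is a product of a single factor $\nabla^j(\FKm-\FKl)$ with $j\le s$, bounded by $h^{l+1-j}$, times bounded factors; since $h<1$ the dominant term is $j=s$, yielding the claimed $h^{l+1-s}$. The estimate for $(\PhiKlm)^{-1}-\idI_{\Km}$ then follows from the mirror identity $\big((\PhiKlm)^{-1}-\idI_{\Km}\big)\circ\FKm=\FKl-\FKm$, running the same argument with $(\FKm)^{-1}$ in place of $(\FKl)^{-1}$. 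I expect the main obstacle to be the core step: establishing the derivative bounds uniformly over the shape-regular family for the curved barycentric Lenoir construction (rather than for a textbook Lagrange interpolant), and handling the $m=\infty$ case, where $\Psi^1$ replaces a polynomial and one must genuinely invoke the $C^{r+1}$ regularity of $\partial\Omega$ to control its $(l+1)$-st derivatives.
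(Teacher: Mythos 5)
The paper offers no proof of this theorem at all: it is imported as ``adapted from \cite[Theorem 3.2]{arnold2020hellan}'', and the argument behind that cited result --- reducing to the difference $\FKm-\FKl$ on the straight element through the identity $\PhiKlm=\FKm\circ(\FKl)^{-1}$, invoking Lenoir's optimal $L^\infty$ bounds on the derivatives of the parametric maps and their interpolation errors against the exact map $F_K^\infty\equiv\Psi^1_K$, and transporting the estimates by the chain rule with uniformly bounded $\nabla^t\FKl$ and $\nabla^t(\FKl)^{-1}$ --- is exactly what you reconstruct, including the mirror identity for $(\PhiKlm)^{-1}$ and the $m=\infty$ case. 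Your proposal is correct and is essentially the same proof that the paper delegates to its reference.
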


We conclude this section with some basic results,
which follow from the chain rule and Theorem \ref{thcur}.
\begin{lemma}\label{propnormanddivError}
For all $1\leq l\leq m\leq r$ and $m=\infty$, consider functions $v\in H^s(\Km;\bbX)$, $\tau\in H^1(\Km;\bbS)$, $\hatv=v\circ\PhiKlm\in H^s(\Kl;\bbX)$ and~$\hattau=\tau\circ\PhiKlm\in H^1(\Kl;\mathbb{S})$, with $0\leq s\leq l+1$ and~$\bbX\in\{\bbR, \bbRn, \bbS\}$. Assume Hypothesis \ref{H3.1}, then
\begin{align}\label{chap::pre::L2norm}
    \|v\|_{s,\Km}\approx \|\hatv\|_{s,\Kl}&,\\
\label{chap::pre::Hdiveq1}
    \|(\Div\tau)\circ\PhiKlm-\hat{\Div}\hattau\|_{0,\Kl}\lesssim &~h^l|\hattau|_{1,\Kl},\\
\label{chap::pre::Hdiveq2}
    \|\Div\tau-(\hat{\Div}\hattau)\circ(\PhiKlm)^{-1}\|_{0,\Km}&\lesssim h^l|\tau|_{1,\Km},
\end{align}
where all constants depend on the piecewise $C^{r+1}$ norm of $\partial\Omega$.
\end{lemma}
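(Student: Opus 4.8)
The plan is to reduce all three estimates to a change of variables under $\Phi:=\PhiKlm$ combined with the chain rule, exploiting that by Theorem~\ref{thcur} the map $\Phi$ is an $\mathcal{O}(h^l)$ perturbation of the identity in $W^{1,\infty}$. Writing $J:=D\Phi$ for its Jacobian, the case $s=1$ of \eqref{curvePhi} gives $\|J-\idI\|_{L^\infty(\Kl)}=\|\nabla(\Phi-\idI_{\Kl})\|_{L^\infty(\Kl)}\lesssim h^l$, so that $J=\idI+\mathcal{O}(h^l)$, $\det J=1+\mathcal{O}(h^l)\approx 1$, and $\|J^{-1}\|_{L^\infty(\Kl)}\approx 1$ for $h$ small; the higher derivatives satisfy $\|\nabla^s\Phi\|_{L^\infty(\Kl)}\lesssim h^{l+1-s}$ for $2\le s\le l+1$ and are thus uniformly bounded. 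The same estimates hold for $(\Phi)^{-1}$ by the second line of \eqref{curvePhi}. These three facts are exactly what produce the gain $h^l$ and fix the admissible range $0\le s\le l+1$.

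For \eqref{chap::pre::L2norm} with $s=0$, the substitution $x=\Phi(\hatx)$ yields $\|v\|_{0,\Km}^2=\int_{\Kl}|\hatv|^2|\det J|\,\dhx$, and $|\det J|\approx 1$ gives the equivalence. For integer $1\le s\le l+1$ I would differentiate $\hatv=v\circ\Phi$ via the Fa\`a di Bruno formula: each derivative $\HatNab^s\hatv$ is a sum of terms $(\nabla^\beta v)\circ\Phi$ with $|\beta|\le s$ multiplied by products of derivatives of $\Phi$ of total order $s$. The leading term uses only $J\approx\idI$ and reproduces $(\nabla^s v)\circ\Phi$, while every remaining term carries at least one factor $\nabla^{\ge 2}\Phi$, uniformly bounded on the range $s\le l+1$; combined with the $s=0$ change of variables this bounds $\|\hatv\|_{s,\Kl}\lesssim\|v\|_{s,\Km}$. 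The reverse inequality is identical using $(\Phi)^{-1}$, and non-integer $s$ follows by interpolation, establishing \eqref{chap::pre::L2norm}.

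The heart of \eqref{chap::pre::Hdiveq1} is an explicit chain-rule identity for the row-wise divergence of the matrix field $\hattau=\tau\circ\Phi$. With $J_{kj}=\partial_{\hatx_j}\Phi_k$, one has $(\HatDiv\hattau)_i=\sum_j\partial_{\hatx_j}\hattau_{ij}=\sum_{j,k}\big(\partial_{x_k}\tau_{ij}\big)\circ\Phi\,J_{kj}$, whereas $\big((\Div\tau)\circ\Phi\big)_i=\sum_j(\partial_{x_j}\tau_{ij})\circ\Phi=\sum_{j,k}(\partial_{x_k}\tau_{ij})\circ\Phi\,\delta_{kj}$. Subtracting collapses the two expressions to the single contraction $\sum_{j,k}(\partial_{x_k}\tau_{ij})\circ\Phi\,(J-\idI)_{kj}$, which is precisely where the factor $J-\idI$, hence $h^l$, appears. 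Taking the $L^2(\Kl)$-norm, pulling out $\|J-\idI\|_{L^\infty(\Kl)}\lesssim h^l$, and noting that $(\nabla\tau)\circ\Phi$ and $\HatNab\hattau$ differ only by multiplication with the uniformly invertible $J$, so that $\|(\nabla\tau)\circ\Phi\|_{0,\Kl}\lesssim|\hattau|_{1,\Kl}$, yields \eqref{chap::pre::Hdiveq1}.

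Estimate \eqref{chap::pre::Hdiveq2} is then the push-forward of \eqref{chap::pre::Hdiveq1}: applying the $s=0$ change of variables to $g:=(\Div\tau)\circ\Phi-\HatDiv\hattau$ on $\Kl$ gives $\|g\circ(\Phi)^{-1}\|_{0,\Km}\approx\|g\|_{0,\Kl}$, and since $g\circ(\Phi)^{-1}=\Div\tau-(\HatDiv\hattau)\circ(\Phi)^{-1}$ while $|\hattau|_{1,\Kl}\approx|\tau|_{1,\Km}$ by the same change-of-variables argument, this is exactly \eqref{chap::pre::Hdiveq2}. The only genuinely delicate step is the bookkeeping in the third paragraph: keeping the spatial-derivative index contracted against the correct column index of $J$ so that the difference telescopes to $J-\idI$ rather than to $J^{-\top}-\idI$ or a stray determinant factor. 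Once that identity is written correctly, the quantitative bounds are immediate consequences of Theorem~\ref{thcur}.
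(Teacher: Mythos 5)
Your proposal is correct and follows exactly the route the paper intends: the paper states Lemma~\ref{propnormanddivError} without a detailed proof, remarking only that it ``follows from the chain rule and Theorem~\ref{thcur},'' and your argument is precisely that chain-rule/change-of-variables computation, with the key identity $\HatDiv\hattau-(\Div\tau)\circ\PhiKlm=\sum_{j,k}(\partial_{x_k}\tau_{ij})\circ\PhiKlm\,(J-\idI)_{kj}$ correctly isolating the $\mathcal{O}(h^l)$ factor from \eqref{curvePhi}. The bookkeeping you flag (contraction against $J-\idI$ rather than $J^{-T}-\idI$, determinant factors absorbed via $\det J\approx 1$) is handled correctly, so the proposal is a valid filling-in of the paper's asserted proof.
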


\section{The curved Hu-Zhang element method}\label{curvehz}
This section generalizes the Hu-Zhang element to curved meshes through the mapping $\Fm$ in \eqref{defFm}, and establishes the well-posedness of the discrete formulation.
Hereafter, unless otherwise specified, we enforce Hypothesis~\ref{H3.1}. 

\subsection{The curved Hu-Zhang element space}\label{HZCurve} 
Let $m\geq1$ denote the order of the geometric approximation. 
For $k\geq3$, we introduce the $k$-th order finite element space on the curvilinear triangulation~$\Thm$, which begins with the finite element spaces~$\Sigho$ and $\Vho$ on $\Tho$, defined in~\eqref{H(div)T1} and~\eqref{2.4}, respectively.

The stress space $\Sighm$ over domain $\Omm$ is defined as
\begin{equation}
    \begin{aligned}
        \label{HdivKm}
    \Sighm:&=\{\sigma\in L^2(\Omm;\bbS)~:~\sigma=\sigma_c+\sigma_b,~\sigma_c\in H^1(\Omm;\bbS),\\
    &\sigma_c|_{\Km}\circ \FKm\in P_k(\Ko;\bbS),~\sigma_b|_{\Km}\in \Sigma_{k,b}(\Km),~\text{for all}~\Km\in\Thm\},
    \end{aligned}
\end{equation}
with $\Sigma_{k,b}(\Km):=\{\tau\in L^2(\Km;\bbS)\,:\, \tau\circ\FKm\in\Sigma_{k,b}(\Ko)\}$, where $\Sigma_{k,b}(\Ko)$ is the~$H(\Div,\Ko;\bbS)$ bubble function space in \eqref{HdivbubbleK1discret}.

When~$\Km$ is an interior element~($\FKm\equiv \idI_{\Ko}$),
$\Sigma_{k,b}(\Km)=\Sigma_{k,b}(\Ko)$ is the full~$H(\Div,\Km;\bbS)$ bubble function space. 
For the boundary element $\Km$, consider any interior edge $E\subset\partial\Km$, then the mapping $\FKm$ satisfies $\FKm|_E=\idI_{\Ko}|_E$. This induces $\tau\cdot\nu|_E=0$, for all $\tau\in\Sigma_{k,b}(\Km)$.
Consequently, the normal component of~$\tau$ vanishes on all interior edges.
This, combined with~\eqref{HdivKm}, indicates that $\Sighm$ is a conforming subspace of~$H(\Div,\Omm;\bbS)$.

It follows from \cite[Remark 3.1]{hu2015finite} that there is an interpolation operator $\Piho:H^1(\Omo;\bbS)\to\Sigho$, which satisfies
\begin{equation}\label{defPi}
    \bbo(\tau-\Piho\tau,\vh)=0,\quad \text{for all}~\tau\in H^1(\Omo;\bbS),\, \vh\in\Vho.
\end{equation}
For each $\Ko\in\Tho$, recall the mapping $\PhiKom:\Ko\to\Km\in\Thm$. Now, given~$\tau\in H^1(\Omm;\bbS)$, the global interpolation operator
$\Pih:H^1(\Omm;\bbS)\to\Sighm$ is defined element-wise by
\begin{equation}\label{PihmDef}
    \Pih\tau|_{\Km} = (\Piho\hattau|_{\Ko})\circ (\PhiKom)^{-1},
\end{equation}
with $\hattau:=\tau\circ\PhiKom$.
Using the techniques in \cite[Section 4.4]{brenner2008mathematical}, we have the following interpolation error estimate
\begin{equation}
    \label{interErrorPi}
    \|\tau-\Pih\tau\|_{0,\Omm}+h\|\tau-\Pih\tau\|_{1,\Thm}\lesssim h^{s}\|\tau\|_{s,\Omm},
\end{equation}
provided that $\tau\in H^s(\Omm;\bbS)$ with $1\leq s\leq k+1$.

The corresponding displacement space $\Vhm$ over $\Omm$ is defined as
\begin{equation}\label{VhKm}
    \Vhm:=\{v\in L^2(\Omm;\bbRn)~:~v\circ\Fm\in \Vho\}.
\end{equation}
Let $\Qh:L^2(\Omm;\bbR^2)\to\Vhm$ be the orthogonal projection operator with respect to the $L^2$-inner product:
\begin{equation}\label{Qhdefine}
    (\Qh u,\vh)_\Omm=(u,\vh)_\Omm,\quad\text{for all}~u\in L^2(\Omm;\bbRn),~ \vh\in\Vhm.
\end{equation}
By the orthogonality over each element $\Km$, it holds that
\begin{equation}\label{interErrorQh}
    \|u-\Qh u\|_{0,\Km}\lesssim h_{\Km}^s\|u\|_{H^s(\Km)},
\end{equation}
for all $u\in H^s(\Km;\bbRn),~0\leq s\leq k$.
The interpolation error estimates \eqref{interErrorPi} and~\eqref{interErrorQh} will be employed in Section~\ref{erroranalysis}.

\subsection{Mixed formulation of the curved Hu-Zhang element}
To construct the mixed variational formulation over the approximate domain~$\Omm$, we extend the bilinear forms in \eqref{abdefine} to parameter-dependent counterparts~$\aam(\cdot,\cdot)$ and $\bbm(\cdot,\cdot)$: 
\begin{equation*}
    \aam(\sigma,\tau):=(\maA\sigma,\tau)_{\Omm},\quad\bbm(\tau,v):=(\Div\tau,v)_{\Omm}.
\end{equation*}
The mixed finite element approximation to~\eqref{eq1} is: 
find~$(\sigmah,\uh)\in\Sighm\times\Vhm$ such that 
\begin{equation}
    \label{mixpuredisplacement}
    \begin{cases}
    \aam(\sigmah,\tauh) + \bbm(\tauh,\uh)=0,&\text{for all}~\tauh\in\Sighm,\\
    \bbm(\sigmah,\vh)=-(\tilde{f},\vh)_{\Omm},&\text{for all}~ \vh\in \Vhm,
    \end{cases}
\end{equation}
with $\tilde{f}=f\circ\Psim\det(\nabla\Psim)$ and $\Psim$ in \eqref{defPsim}.
The well-posedness of the discrete problem~\eqref{mixpuredisplacement}  will be rigorously established in the following section. Spaces and corresponding norms over the approximate domain $\Omm$, including~$H(\Div,\Omm;\bbS)$ and $H^1(\Thm;\bbX)$, are defined analogously to those in \eqref{defHdiv}--\eqref{defHdivnorm} and \eqref{brokenH1}--\eqref{defbreakH1}, respectively.
This subsection concludes with the following lemma, which is critical for both stability analysis and error estimates.
 
\begin{lemma}
    Let $1\leq l \leq r$ such that $m>l$, $\Phi\equiv\Philm:\Omega^l\to\Omm$ from \eqref{PhilmDef} with~$1<m\leq r$ and $m=\infty$. For any $\sigma,\tau\in H(\Div,\Omm;\bbS)$ and $v\in L^2(\Omm;\bbRn)$, define their pullbacks $\hatsigma=\sigma\circ\Phi$, $\hattau=\tau\circ\Phi$ and $\hatv=v\circ\Phi$. The following estimate holds:
    \begin{equation}\label{amldiff}
        |\aam(\sigma,\tau)-a^l(\hatsigma,\hat{\tau})|\lesssim h^l\|\hatsigma\|_{0,\OmlS}\|\hat{\tau}\|_{0,\OmlS}.
    \end{equation}
    Furthermore, if $\sigma\in H(\Div,\Omm;\bbS)\cap H^1(\Thm;\bbS)$,
    \begin{equation}\label{bmldiff}
        |\bbm(\sigma,v)-b^l(\hatsigma,\hatv)|\lesssim h^l\|\hat{\nabla}_h\hatsigma\|_{0,\OmlS}\|\hatv\|_{0,\OmlS},
    \end{equation}
    and if $v\in H^1(\Thm;\bbRn)$,
    \begin{equation}\label{bmldiffMo}
        \begin{aligned}
            |\bbm(\sigma,v)-&b^l(\hatsigma,\hatv)|\lesssim h^l\|\hatsigma\|_{0,\OmlS}\|\hat{\nabla}_h\hatv\|_{0,\OmlS}\\
            & +h^l\bigg(\sum_{\El\in\EPhl}h_{\El}\|\hatsigma\hat{n}\|_{L^2(\El)}^2\bigg)^{\frac{1}{2}}\bigg(\sum_{\El\in\EPhl}h^{-1}_{\El}\|\hatv\|_{L^2(\El)}^2\bigg)^{\frac{1}{2}},
        \end{aligned}
    \end{equation}
where $\HatNab_h$ is the broken gradient operator on $\Thl$.
\end{lemma}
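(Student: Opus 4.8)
The plan is to prove all three estimates by pulling the integrals over $\Omm$ back to $\Oml$ through $\Phi\equiv\Philm$, and then to exploit the fact that $\Phi=\idI_{\Kl}$ on every interior element, so that every error integrand is supported on the boundary strip $\OmlS$. Writing $J:=\nabla\Phi$, Theorem \ref{thcur} (equation \eqref{curvePhi}) supplies the geometric ingredients $\|J-I\|_{L^\infty(\OmlS)}\lesssim h^l$, $\|\det J-1\|_{L^\infty(\OmlS)}\lesssim h^l$ and $\|\operatorname{cof}(J)-I\|_{L^\infty(\OmlS)}\lesssim h^l$, together with the sharper pointwise bound $\|\Phi-\idI_{\Kl}\|_{L^\infty(\OmlS)}\lesssim h^{l+1}$, which I expect to be decisive for the tangential part of \eqref{bmldiffMo}.

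For \eqref{amldiff} I would change variables in $\aam(\sigma,\tau)=\int_{\Omm}\maA\sigma:\tau\,\dx$ to obtain $\int_{\Oml}(\maA\circ\Phi)\,\hatsigma:\hattau\,|\det J|\,\dhx$ and subtract $a^l(\hatsigma,\hattau)=\int_{\Oml}\maA\,\hatsigma:\hattau\,\dhx$; the resulting integrand vanishes on interior elements, while on $\OmlS$ the factor $(\maA\circ\Phi)|\det J|-\maA$ is controlled by $\|\maA\|_{L^\infty}|\det J-1|+\|\maA\circ\Phi-\maA\|_{L^\infty}\lesssim h^l$ using the Lipschitz continuity of $\maA$ and $\|\Phi-\idI_{\Kl}\|_{L^\infty}\lesssim h^{l+1}$, after which Cauchy--Schwarz on $\OmlS$ finishes the proof. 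For \eqref{bmldiff} the same change of variables in $\bbm(\sigma,v)=\int_{\Omm}\Div\sigma\cdot v\,\dx$ and subtraction of $b^l(\hatsigma,\hatv)$ leaves the $\OmlS$-supported integrand $(\Div\sigma)\circ\Phi\,(|\det J|-1)+[(\Div\sigma)\circ\Phi-\HatDiv\hatsigma]$; the first summand is $\lesssim h^l|(\Div\sigma)\circ\Phi|$ and the second is handled element-wise by \eqref{chap::pre::Hdiveq1} of Lemma \ref{propnormanddivError}. Bounding $\|(\Div\sigma)\circ\Phi\|_{0,\OmlS}\le\|\HatDiv\hatsigma\|_{0,\OmlS}+h^l\|\HatNab_h\hatsigma\|_{0,\OmlS}\lesssim\|\HatNab_h\hatsigma\|_{0,\OmlS}$ and using $|\hatsigma|_{1,\OmlS}\approx\|\HatNab_h\hatsigma\|_{0,\OmlS}$ yields \eqref{bmldiff}.

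The estimate \eqref{bmldiffMo} is the genuine obstacle, since the derivative must be moved off $\sigma$ and onto $\hatv\in H^1(\Thl;\bbRn)$. I would integrate $\bbm(\sigma,v)$ by parts element-wise over the physical boundary elements and $b^l(\hatsigma,\hatv)$ over the reference boundary elements. After change of variables and the chain rule $(\nabla v)\circ\Phi=\HatNab\hatv\,J^{-1}$, the two volume contributions combine into $\int_{\OmlS}\hatsigma:\bigl(\HatNab_h\hatv\,(\operatorname{cof}(J)^T-I)\bigr)\,\dhx$, which is $\lesssim h^l\|\hatsigma\|_{0,\OmlS}\|\HatNab_h\hatv\|_{0,\OmlS}$, giving the first term of \eqref{bmldiffMo}. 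For the boundary contributions the key observation is that every interior edge carries $\Phi=\idI_{\Kl}$, so its two occurrences in the two integrations by parts are literally identical and cancel in the difference; only the curved boundary edges $\El\in\EPhl$ survive. Applying Nanson's formula $n\,\ds=\operatorname{cof}(J)\,\hat{n}\,\dhs$ to transport each $\int_{\Em}\sigma n\cdot v\,\ds$ back to $\El$, the surviving boundary difference reduces to $\sum_{\El\in\EPhl}\int_{\El}\hatsigma\,(\operatorname{cof}(J)-I)\hat{n}\cdot\hatv\,\dhs$.

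The crux is to estimate this last sum by the advertised weighted product of $\|\hatsigma\hat{n}\|_{L^2(\El)}$ and $\|\hatv\|_{L^2(\El)}$. My plan is to decompose $(\operatorname{cof}(J)-I)\hat{n}$ into its $\hat{n}$- and $\hat{t}$-components. The normal component is $O(h^l)$ and contracts $\hatsigma$ to exactly $\hatsigma\hat{n}$, so a scaled Cauchy--Schwarz produces precisely $h^l\bigl(\sum_{\El}h_{\El}\|\hatsigma\hat{n}\|_{L^2(\El)}^2\bigr)^{1/2}\bigl(\sum_{\El}h_{\El}^{-1}\|\hatv\|_{L^2(\El)}^2\bigr)^{1/2}$, the second term of \eqref{bmldiffMo}. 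The tangential component is the delicate piece: a direct bound would only furnish the tangential trace $\|\hatsigma\hat{t}\|_{L^2(\El)}$, whose naive scaled trace inequality loses half an order and produces an inadmissible $h^{l-1}$ factor. To save it I would use that this tangential component equals, up to higher-order factors, $\hat{n}\cdot\partial_{\hat t}(\Phi-\idI_{\Kl})$, integrate by parts once along each edge $\El$ to shift $\partial_{\hat t}$ off the geometric factor, and then invoke the sharper estimate $\|\Phi-\idI_{\Kl}\|_{L^\infty}\lesssim h^{l+1}$ to absorb the extra edge-tangential derivative, with the vertex contributions telescoping along the connected boundary chain. I expect this edge-wise integration by parts, complicated by the fact that $\El$ is itself curved for $l\ge2$, to be the hardest and most technical step, whereas \eqref{amldiff} and \eqref{bmldiff} are routine change-of-variables perturbation estimates.
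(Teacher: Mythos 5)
Your treatment of \eqref{amldiff} and \eqref{bmldiff} is essentially the paper's own proof: element-wise change of variables with $J=\HatNab_h\Phi$, the observation that $J=\idI$ off the boundary layer so that every error integrand is supported on $\OmlS$, the bound $\|\det J-1\|_{L^\infty}\lesssim h^l$ from \eqref{curvePhi}, and \eqref{chap::pre::Hdiveq1} of Lemma \ref{propnormanddivError} for the mismatch $(\Div\sigma)\circ\Phi-\HatDiv\hatsigma$. (Your explicit Lipschitz argument for $\maA\circ\Phi-\maA$ addresses a point that the paper's identity \eqref{amldiffCs} silently absorbs; it is correct and if anything slightly more careful.) For \eqref{bmldiffMo} you also follow the paper's skeleton up to and including the key reduction: element-wise integration by parts, exact cancellation of the interior-edge terms because $\Phi|_E=\idI_{\Kl}|_E$ there, and Nanson's formula on the boundary edges; your merged volume term $\hatsigma:\bigl(\HatNab_h\hatv\,(\operatorname{cof}(J)^T-\idI)\bigr)$ is precisely the two volume terms of the paper's identity \eqref{bmlMdiffCs} combined.

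The divergence, and the genuine gap, is your final step for the tangential component of $(\operatorname{cof}(J)-\idI)\hat n$. First, the premise that motivates it is a miscalculation: weighted Cauchy--Schwarz applied to the tangential piece loses no power of $h$. That contribution is bounded by $h^l$ times $\sum_{\El}\bigl(h_{\El}^{1/2}\|\hatsigma\hat t\|_{L^2(\El)}\bigr)\bigl(h_{\El}^{-1/2}\|\hatv\|_{L^2(\El)}\bigr)$, exactly the advertised weighted structure with the tangential trace $\hatsigma\hat t$ in place of $\hatsigma\hat n$; no factor $h^{l-1}$ appears anywhere. Second, the repair you sketch would not close: integrating by parts along $\El$ moves $\partial_{\hat t}$ onto $\hatsigma\hat n\cdot\hatv$, whose tangential derivative is controlled by no norm on the right-hand side of \eqref{bmldiffMo} (for $v\in H^1(\Thm;\bbRn)$ the edge trace is only $H^{1/2}$, and the lemma is stated for such $v$, so inverse estimates are unavailable); moreover the vertex terms do not telescope, since $\hat n$, $\hatsigma$ and $\hatv$ all jump at boundary vertices. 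Even in the discrete case, where the maneuver is legal, $\partial_{\hat t}$ costs a factor $h^{-1}$ that exactly cancels the gain from $\|\Phi-\idI_{\Kl}\|_{L^\infty}\lesssim h^{l+1}$, so the detour buys nothing. The paper does none of this: from \eqref{bmlMdiffCs} it concludes directly by \eqref{curvePhi} and weighted Cauchy--Schwarz, estimating $(\hatsigma(J^{-T}-\idI)\hat n,\hatv\det J)_{\partial\Kl\cap\partial\Oml}$ wholesale via $\|J^{-T}-\idI\|_{L^\infty}\lesssim h^l$. You have in fact put your finger on a looseness in the paper rather than a need for new machinery: estimated this way, the boundary term involves the full edge trace of $\hatsigma$, not merely $\hatsigma\hat n$ as displayed in \eqref{bmldiffMo}; but this weaker form suffices in every subsequent application (in \eqref{PihmProperty}, in \eqref{bmboundedby0hmand1hm}, and in Lemma \ref{geoError}, the full trace is absorbed by the same weighted norms via scaled trace or inverse inequalities). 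The correct move is therefore to drop the edge-wise integration by parts and simply record the Cauchy--Schwarz bound, with $\|\hatsigma\|_{L^2(\El)}$ if one wants a literally justified statement.
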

\begin{proof}
Let $J:=\HatNab_h\Phi$ be the element-wise Jacobian.
By transforming the integrals of the bilinear forms $\aam(\sigma,\tau)$ and $\bbm(\sigma,v)$ from~$\Omm$ to $\Omega^l$, we derive the following equations on each element:
\begin{equation}\label{amldiffCs}
    (\maA\sigma,\tau)_{\Km}-(\maA\hatsigma,\hattau)_{\Kl}=(\maA\hatsigma,\hattau(\det(J)-1))_{\Kl},
\end{equation}
and
\begin{equation}\label{bmldiffCs}
\begin{aligned}
    (\Div\sigma,v)_{\Km}&-(\HatDiv\hatsigma,\hatv)_{\Kl}\\&=(\widehat{\Div\sigma}-\HatDiv\hatsigma,\hatv\det(J))_{\Kl}+(\HatDiv\hatsigma,\hatv(\det(J)-1))_{\Kl}.
\end{aligned}
\end{equation}
Applying integration by parts and using $\Phi|_E=\idI_{\Kl}|_E$ on interior edges, we obtain:
\begin{equation}\label{bmlMdiffCs}
\begin{aligned}
    (\Div\sigma&,v)_{\Km}-(\HatDiv\hatsigma,\hatv)_{\Kl}\\
    &=-\big(\hatsigma,[\widehat{\varepsilon(v)}-\hat{\varepsilon}(\hatv)]\det(J)\big)_{\Kl}-\big(\hatsigma,\hat{\varepsilon}(\hatv)(\det(J)-1)\big)_{\Kl}\\
    &+\big(\hatsigma(J^{-T}-\bdI)\hat{n},\hatv\det(J)\big)_{\partial\Kl\cap\partial\Oml}+\big(\hatsigma\hat{n},\hatv(\det(J)-1)\big)_{\partial\Kl\cap\partial\Oml}.
\end{aligned}
\end{equation}
Since $J=\bdI$ holds for all interior triangles, the estimates in \eqref{amldiff}, \eqref{bmldiff} and \eqref{bmldiffMo} follow from combining~\eqref{amldiffCs}, \eqref{bmldiffCs} and \eqref{bmlMdiffCs} with the parametric bound in \eqref{curvePhi} and applications of the weighted Cauchy-Schwarz inequality.
\end{proof}

\subsection{Well-posedness}
Following the abstract stability theory for mixed finite element methods \cite{boffi2013mixed}, we need to verify two fundamental conditions:
\begin{enumerate}
\item K-ellipticity.
\begin{equation}
\label{chap::well::K}
\aam(\tau,\tau)\gtrsim\|{\tau}\|^2_{H(\Div,\Omm)},\quad\text{for all}~\tau\in\Zhm,
\end{equation}
where the discrete kernel space $\Zhm$ is defined as:
\begin{equation}
\label{Zh}
\Zhm:=\{\tau\in \Sighm~:~ \bbm(\tau, v)=0,~\text{for all}~v\in \Vhm\}.  
\end{equation}
\item Discrete inf-sup condition.
\begin{equation}
\label{chap::well::B}
\sup_{0\neq\tau\in \Sighm}\frac{\bbm(\tau,v)}{\|{\tau}\|_{H(\Div,\Omm)}}\gtrsim \|{v}\|_{0,\Omm},\quad \text{for all}~ v\in \Vhm. 
\end{equation}
\end{enumerate}

\subsubsection{K-ellipticity}
 The following lemma establishes the K-ellipticity of $\aam(\cdot,\cdot)$ over the discrete kernel space $\Zhm$.
\begin{lemma}\label{IPhdinvBoundLemma}
    For any $\tau\in\Sighm$, it holds that
    \begin{equation}\label{IPhdivBound}
        \|(\idI-\Qh)\Div\tau\|_{0,\Km}\lesssim \min\big\{h|\tau|_{1,\Km},\,\|\tau\|_{0,\Km}\big\},\quad \text{for all}~ \Km\in\Thm.
    \end{equation}
\end{lemma}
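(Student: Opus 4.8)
The plan is to argue element by element and to transfer everything to the straight triangle $\Ko$ through the map $\PhiKom=\FKm$. On an interior element $\Km\in\TIhm$ we have $\FKm\equiv\idI_{\Ko}$, so $\tau|_{\Km}$ is a genuine polynomial of degree $\le k$ and $\Div\tau\in P_{k-1}(\Km;\bbRn)=\Vhm|_{\Km}$; hence $(\idI-\Qh)\Div\tau=0$ and the bound holds trivially. The whole difficulty sits on the boundary elements $\Km\in\TPhm$, where $\FKm$ is a genuine degree-$m$ diffeomorphism and $\Div\tau$ is \emph{not} a polynomial — indeed this is exactly the inclusion failure $\Div\Sighm\not\subset\Vhm$ flagged in the introduction, so $(\idI-\Qh)\Div\tau$ cannot simply be discarded.

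The key device I would use is the best-approximation property of $\Qh$ together with a carefully chosen comparison function. Set $\hattau:=\tau\circ\PhiKom\in P_k(\Ko;\bbS)$, which is a true polynomial by the definition of $\Sighm$. Then $\HatDiv\hattau\in P_{k-1}(\Ko;\bbRn)$, so $w:=(\HatDiv\hattau)\circ(\PhiKom)^{-1}$ satisfies $w\circ\FKm\in P_{k-1}(\Ko;\bbRn)$ and therefore $w\in\Vhm|_{\Km}$. Since $\Qh\Div\tau$ is the $L^2(\Km)$-best approximation to $\Div\tau$ from $\Vhm|_{\Km}$,
\begin{equation*}
\|(\idI-\Qh)\Div\tau\|_{0,\Km}\le\|\Div\tau-w\|_{0,\Km}.
\end{equation*}
Transforming the integral to $\Ko$ and invoking $\det(\nabla\PhiKom)\approx1$ (a consequence of Theorem \ref{thcur} with $l=1$, which yields $\|\nabla\PhiKom-\idI\|_{L^\infty}\lesssim h$), the right-hand side is $\approx\|(\Div\tau)\circ\PhiKom-\HatDiv\hattau\|_{0,\Ko}$, which is precisely the divergence discrepancy bounded by \eqref{chap::pre::Hdiveq1} with $l=1$, giving $\|(\idI-\Qh)\Div\tau\|_{0,\Km}\lesssim h\,|\hattau|_{1,\Ko}$.

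From this single estimate the two branches of the minimum follow by treating $|\hattau|_{1,\Ko}$ in two ways. For the first branch, the chain rule with the Jacobian bounds of Theorem \ref{thcur} gives $|\hattau|_{1,\Ko}\lesssim|\tau|_{1,\Km}$, hence $\|(\idI-\Qh)\Div\tau\|_{0,\Km}\lesssim h|\tau|_{1,\Km}$. For the second branch, because $\hattau$ is a polynomial of degree $\le k$ on the element $\Ko$ of diameter $\approx h$, the standard inverse inequality gives $|\hattau|_{1,\Ko}\lesssim h^{-1}\|\hattau\|_{0,\Ko}$, and $\|\hattau\|_{0,\Ko}\approx\|\tau\|_{0,\Km}$ by \eqref{chap::pre::L2norm} with $s=0$; this yields $\|(\idI-\Qh)\Div\tau\|_{0,\Km}\lesssim\|\tau\|_{0,\Km}$. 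Taking the minimum of the two bounds completes the argument. The main obstacle is conceptual rather than computational: one must recognize that, although $\Div\tau$ is non-polynomial on curved elements, its reference pullback lies within $\mathcal{O}(h|\hattau|_{1,\Ko})$ of the \emph{genuine} polynomial $\HatDiv\hattau$ that can be transported back into $\Vhm$, so the best-approximation property converts this geometric-consistency defect into the required estimate; the inverse inequality in the second branch is legitimate only because it is applied to $\hattau$ — a true polynomial of fixed degree — and not to $\Div\tau$ itself.
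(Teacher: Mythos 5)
Your proof is correct and follows essentially the same route as the paper: interior elements are handled trivially, and on boundary elements the comparison function $(\HatDiv\hattau)\circ(\FKm)^{-1}\in\Vhm|_{\Km}$ together with the $L^2$-projection property reduces everything to the divergence discrepancy of Lemma \ref{propnormanddivError} plus an inverse estimate. The only (immaterial) difference is that you invoke \eqref{chap::pre::Hdiveq1} on $\Ko$ combined with the norm equivalence \eqref{chap::pre::L2norm}, while the paper applies \eqref{chap::pre::Hdiveq2} directly on $\Km$.
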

\begin{proof}
    For any interior element $\Km\in\TIhm$, we have $\Km=\Ko$ and $\tau|_{\Km}\in P_k(\Km;\bbS)$. This implies~$\Div\tau|_{\Km}\in P_{k-1}(\Km;\bbRn)$.
    Since $\Qh$ preserves polynomials of degree~$k-1$, one obtains $(\idI-\Qh)\Div\tau|_{\Km}=0$, so that \eqref{IPhdivBound} is satisfied trivially.
    For any boundary element $\Km\in\TPhm$, define $\hattau:=\tau|_{\Km}\circ\FKm$ and $v:=\HatDiv\hattau\circ(\FKm)^{-1}$. 
    Then
    \begin{equation*}
    \begin{aligned}
        \|(\idI-\Qh)\Div\tau\|_{0,\Km}&=\|(\idI-\Qh)(\Div\tau-v)\|_{0,\Km}\\
        &\leq \|\Div\tau-\HatDiv\hattau\circ(\FKm)^{-1}\|_{0,\Km}.
    \end{aligned}
    \end{equation*}
    The combination of the estimate in \eqref{chap::pre::Hdiveq2} with $l=1$ and the inverse estimate \cite[Lemma 4.5.3]{brenner2008mathematical} establishes \eqref{IPhdivBound} for boundary elements, which completes the proof.
\end{proof}
The definition of the kernel space \eqref{Zh} implies that 
$
(\idI-\Qh)\Div\tau=\Div\tau
$ holds for all $\tau\in\Zhm$.
This, combined with Lemma \ref{IPhdinvBoundLemma}, yields
$
\|\Div\tau\|_{0,\Omm}\lesssim \|\tau\|_{0,\Omm}
$. Consequently, the K-ellipticity condition \eqref{chap::well::K} is verified.
 
\subsubsection{Discrete inf-sup condition}\label{diseretinfsupcondtion}
For $m\geq2$, the non-polynomial structure of $\Sighm$ and $\Vhm$ over the boundary element~$\Km$ introduces intrinsic difficulties in analyzing the discrete inf-sup condition.
This challenge originates from the inherent difficulty in establishing a precise characterization of the divergence space associated with the bubble function space $\Sigma_{k,b}(\Km)$, analogous to the result in \cite{hu2015finite,hu2014family}.

The key to overcoming the aforementioned challenge lies in the following theorem.
By lifting the $H(\Div)$-norm in \eqref{oldStablity} to the broken $H^1$-norm, this theorem enables us to establish the discrete inf-sup condition on $\Thm$ through the error estimate between~$\bbm(\cdot,\cdot)$ and $\bbo(\cdot,\cdot)$.
The proof follows a methodology similar to that in \cite{chen2018fast,hu2015finite}, and can be divided into two main steps: see Lemma~\ref{trihz::lembubblespacepro} and Lemma~\ref{trihz::lemconT} below for details.
It is worth noting that the inf-sup condition in mesh-dependent norms on simplicial meshes introduced in~\cite{chen2018fast}, which is extended here to curved meshes, will be employed in deriving the~$L^2$-error estimate for the stress in Section \ref{L2errorstress}.

\begin{theorem}\label{trihz::importBB}
For any $\vh\in \Vho$, there exists a $\tauh\in\Sigho$, such that
\begin{equation}\label{InfsupImportantance}
    \Div\tauh=\vh\quad\text{and}\quad \|\tauh\|_{1, \Tho}\lesssim \|\vh\|_{0,\Omo}.
\end{equation}
\end{theorem}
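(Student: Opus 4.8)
The plan is to reproduce the two-step decomposition underlying the classical Hu-Zhang inf-sup argument, but to carry it out in the broken $H^1$-norm rather than the $H(\Div)$-norm. I would seek $\tauh=\tau_1+\tau_2\in\Sigho$, where $\tau_1$ is a globally continuous (hence $H^1$) piecewise-$P_k$ symmetric tensor carrying the ``low-frequency'' part of the divergence, and $\tau_2$ is an $H(\Div)$-bubble correction supplying the remainder element by element. The two ingredients are precisely the forthcoming Lemma~\ref{trihz::lembubblespacepro} and Lemma~\ref{trihz::lemconT}.

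\textbf{Step 1 (continuous part, Lemma~\ref{trihz::lemconT}).} I would first construct $\tau_1\in\Sigho\cap H^1(\Omo;\bbS)$ such that, on every $K\in\Tho$, the residual $\vh-\Div\tau_1$ is $L^2(K)$-orthogonal to the space $\mathrm{RM}(K)$ of rigid-body motions, with the stability bound $\|\tau_1\|_{1,\Omo}\lesssim\|\vh\|_{0,\Omo}$. The key observation making this feasible is that, for $r\in\mathrm{RM}(K)$, integration by parts gives $\int_K\Div\tau_1\cdot r=\int_{\partial K}(\tau_1\nu)\cdot r$, since $\nabla r$ is antisymmetric and $\tau_1$ is symmetric; hence the three orthogonality conditions per element are \emph{purely edge-flux conditions} on $\tau_1\nu$. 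This allows $\tau_1$ to be assembled from consistently chosen edge functionals of a globally $C^0$ $P_k$ tensor, which keeps it in $H^1$ and lets a scaling argument deliver the $H^1$-stability. Being globally continuous, its broken $H^1$-norm equals its $H^1$-norm, so no interelement jump terms arise.

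\textbf{Step 2 (bubble correction, Lemma~\ref{trihz::lembubblespacepro}).} On each $K$ set $g_K:=(\vh-\Div\tau_1)|_K\in P_{k-1}(K;\bbRn)$; by Step 1, $g_K\perp\mathrm{RM}(K)$. The bubble lemma characterises $\Div\Sigma_{k,b}(K)$ as exactly the $L^2(K)$-orthogonal complement of $\mathrm{RM}(K)$ in $P_{k-1}(K;\bbRn)$, so there is $\tau_2|_K\in\Sigma_{k,b}(K)$ with $\Div\tau_2|_K=g_K$; a reference-element bounded right inverse combined with a standard scaling argument then yields $\|\tau_2\|_{1,K}\lesssim\|g_K\|_{0,K}$. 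Since each bubble has vanishing normal trace on $\partial K$, gluing the pieces gives $\tau_2\in\Sigho$. Consequently $\tauh=\tau_1+\tau_2\in\Sigho$ satisfies $\Div\tauh=\Div\tau_1+(\vh-\Div\tau_1)=\vh$, and using $\|\Div\tau_1\|_{0,\Omo}\lesssim|\tau_1|_{1,\Omo}\lesssim\|\vh\|_{0,\Omo}$,
\[
\|\tauh\|_{1,\Tho}\le\|\tau_1\|_{1,\Tho}+\Big(\sum_{K\in\Tho}\|\tau_2\|_{1,K}^2\Big)^{1/2}\lesssim\|\vh\|_{0,\Omo}+\|\vh-\Div\tau_1\|_{0,\Omo}\lesssim\|\vh\|_{0,\Omo}.
\]

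I expect the main obstacle to be Step 1. In the classical setting the continuous part needs only $H(\Div)$-stability, which could in principle be drawn from a continuous divergence right-inverse; here one must instead produce a genuinely $H^1$-stable continuous tensor, and on a polygonal domain elliptic regularity is unavailable, so the construction must be explicit through edge functionals, with care that shared-edge data remain consistent to guarantee global $C^0$-continuity and that the three element-wise flux conditions are simultaneously solvable. The bubble step, by contrast, reduces to a finite-dimensional scaling argument once the divergence characterisation of $\Sigma_{k,b}(K)$ is established.
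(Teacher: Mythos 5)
Your proposal is correct and takes essentially the same route as the paper: the identical two-step decomposition $\tauh=\tau_1+\tau_2$, with $\tau_1\in\Sigho\cap H^1(\Omo;\bbS)$ making the residual element-wise orthogonal to rigid motions (the paper's Lemma~\ref{trihz::lemconT}) and $\tau_2$ the bubble correction built from $\Div\Sigma_{k,b}(\Ko)=R^\perp(\Ko)$ with $H^1$-stability by a reference-element scaling argument (the paper's Lemma~\ref{trihz::lembubblespacepro}). The only divergence is that the ``main obstacle'' you flag in Step 1 does not require a new explicit edge-functional construction: the paper simply observes that in the proof of \cite[Lemma 3.1]{hu2015finite} both constituents of $\tau_1$ (the interpolation of a continuous $H^1$ right inverse of the divergence, which exists on Lipschitz domains without elliptic regularity, and the edge correction) are already controlled in the $H^1$-norm by $\|\vh\|_{0,\Omo}$.
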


Before proving Theorem \ref{trihz::importBB}, we introduce the following rigid motion space on each element~$\Ko\in\Tho$.
\begin{equation*}
R(\Ko):=\{v\in H^1(\Ko;\bbRn)~:~ (\nabla v+\nabla v^T)/2=0\}.
\end{equation*}
It follows from the definition that $R(\Ko)$ is a subspace of $P_1(\Ko;\bbRn)$. This allows for defining the orthogonal complement space of $R(\Ko)$ with respect to~$P_{k-1}(\Ko;\bbRn)$ by 
\begin{equation*}
R^\perp(\Ko):=\{v\in P_{k-1}(\Ko;\bbRn)~:~(v,w)_{\Ko}=0, ~\text{for all}~ w\in R(\Ko)\}.
\end{equation*}
According to \cite[Theorem 2.2]{hu2015finite}, the divergence space of~$\Sigma_{k,b}(\Ko)$ satisfies
\begin{equation*}
   \Div\Sigma_{k,b}(\Ko)=R^\perp(\Ko), \quad \text{for all}~\Ko\in\Tho.
\end{equation*}
This intrinsic property facilitates the subsequent lemma concerning discrete stability.
\begin{lemma}\label{trihz::lembubblespacepro}
    For any $v\in R^\perp(\Ko)$, there exists a $\tau\in\Sigma_{k,b}(\Ko)$, such that
    \begin{equation*}
        \Div\tau=v\quad \text{and}\quad \|\tau\|_{1,\Ko}\lesssim \|v\|_{0,\Ko}.
    \end{equation*}
\end{lemma}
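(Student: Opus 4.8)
The existence of a bubble $\tau$ with $\Div\tau=v$ is already contained in the surjectivity $\Div\Sigma_{k,b}(\Ko)=R^\perp(\Ko)$ recalled just above, so the only real content of the lemma is the stability bound $\|\tau\|_{1,\Ko}\lesssim\|v\|_{0,\Ko}$, with a constant independent of the mesh size. Since this is a purely local, finite-dimensional statement, the plan is to prove it first on a fixed reference triangle $\hat K$ and then transfer it to an arbitrary shape-regular $\Ko$ by a Piola scaling argument. On $\hat K$, both $\Sigma_{k,b}(\hat K)$ and $R^\perp(\hat K)$ are finite-dimensional, and by \cite[Theorem 2.2]{hu2015finite} (valid on any triangle) the divergence map $\HatDiv:\Sigma_{k,b}(\hat K)\to R^\perp(\hat K)$ is surjective. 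Hence it admits a bounded linear right inverse, and because all norms on a finite-dimensional space are equivalent, every $\hat v\in R^\perp(\hat K)$ has a preimage $\hat\tau\in\Sigma_{k,b}(\hat K)$ with $\HatDiv\hat\tau=\hat v$ and $\|\hat\tau\|_{1,\hat K}\lesssim\|\hat v\|_{0,\hat K}$.

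To pass to $\Ko$, let $F_\Ko(\hat x)=B\hat x+b$ be the affine map from $\hat K$ onto $\Ko$, and I would pair the double contravariant Piola transform for stresses, $\tau=\tfrac{1}{\det B}\,B\,\hat\tau\,B^{T}$, with the contravariant Piola transform for vector fields, $v=\tfrac{1}{\det B}\,B\,\hat v$. The first is a bijection of $\Sigma_{k,b}(\hat K)$ onto $\Sigma_{k,b}(\Ko)$: it preserves polynomial degree and symmetry, and it maps the vanishing-normal-trace condition on $\partial\hat K$ to that on $\partial\Ko$. A short computation (testing against constant vectors) shows that the two transforms intertwine the divergence, $\Div\tau=\tfrac{1}{\det B}\,B\,\HatDiv\hat\tau$, so that $\Div\tau=v$ holds in the physical coordinates exactly when $\HatDiv\hat\tau=\hat v$ holds in the reference coordinates. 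The step I expect to be the main obstacle is checking that the vector Piola transform carries $R^\perp(\hat K)$ \emph{onto} $R^\perp(\Ko)$: tracking rigid motions through $F_\Ko$ directly is delicate, since $R$ is not affine-invariant. I would bypass this entirely by invoking the characterization $R^\perp=\Div\Sigma_{k,b}$ together with the intertwining, which gives $\tfrac{1}{\det B}\,B\,\HatDiv\,\Sigma_{k,b}(\hat K)=\Div\,\Sigma_{k,b}(\Ko)=R^\perp(\Ko)$, so that $\hat v\in R^\perp(\hat K)$ is equivalent to $v\in R^\perp(\Ko)$.

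Finally I would invoke the shape-regularity estimates $\|B\|\approx\|B^{-1}\|^{-1}\approx h_{\Ko}$ and $|\det B|\approx h_{\Ko}^2$, which yield the norm equivalences $\|v\|_{0,\Ko}\approx\|\hat v\|_{0,\hat K}$, $\|\tau\|_{0,\Ko}\approx h_{\Ko}\|\hat\tau\|_{0,\hat K}$ and $|\tau|_{1,\Ko}\approx|\hat\tau|_{1,\hat K}$. Given $v\in R^\perp(\Ko)$, I pull it back to $\hat v\in R^\perp(\hat K)$ by the inverse vector Piola, solve on $\hat K$ for $\hat\tau$ as above, and push it forward to $\tau\in\Sigma_{k,b}(\Ko)$, so that $\Div\tau=v$ by the intertwining. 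Combining the scalings with the reference bound and $h_{\Ko}\lesssim 1$ then gives
\[
\|\tau\|_{1,\Ko}^2\lesssim h_{\Ko}^2\|\hat\tau\|_{0,\hat K}^2+|\hat\tau|_{1,\hat K}^2\lesssim\|\hat\tau\|_{1,\hat K}^2\lesssim\|\hat v\|_{0,\hat K}^2\approx\|v\|_{0,\Ko}^2,
\]
which is the asserted estimate, with a constant depending only on $k$ and the shape-regularity of $\Ko$.
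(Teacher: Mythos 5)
Your proposal is correct, and it is at heart the same argument as the paper's: both reduce the stability bound to the fixed reference triangle, invoke norm equivalence on the finite-dimensional bubble space $\Sigma_{k,b}(\hat K)$, and transfer the estimate by an affine scaling argument, with the resulting constant depending only on $k$ and shape regularity. The organization, however, runs in opposite directions. The paper starts on the physical element: it picks the \emph{minimal $L^2$-norm} preimage $\tau\in\Sigma_{k,b}(\Ko)$ of $v$, pulls it back via $\hat\tau(\hat x)=B^{-1}\tau(x)B^{-T}$ (your double Piola without the $1/\det B$ factor, which is immaterial since it is a constant scalar per element), observes $\Div\tau = B\,\HatDiv\hat\tau$, and applies norm equivalence to get $\|\tau\|_{1,\Ko}\lesssim\|\Div\tau\|_{0,\Ko}$. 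You instead solve on the reference element with a fixed bounded right inverse of $\HatDiv$ and push forward with the matched Piola pair, which forces you to verify that the transform maps $\Sigma_{k,b}(\hat K)$ onto $\Sigma_{k,b}(\Ko)$ and that $\hat v\in R^\perp(\hat K)$ corresponds to $v\in R^\perp(\Ko)$ --- a point you handle cleanly by the characterization $R^\perp=\Div\Sigma_{k,b}$ rather than by tracking rigid motions. The paper's minimal-norm trick avoids those correspondence checks entirely (it never needs $v$ to pull back into $R^\perp(\hat K)$), at the price that the subspace of $\Sigma_{k,b}(\hat K)$ on which $\|\HatDiv\cdot\|_{0,\hat K}$ is a norm is the pullback of the physical minimal-norm complement and hence depends on $B$, so the uniformity of the norm-equivalence constant is slightly less transparent there; your reference-first construction invokes norm equivalence once, on a configuration independent of the element, making the $h$-independence of the constant manifest. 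Both routes yield the same estimate with the same dependencies.
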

\begin{proof}
Choosing $\tau\in\Sigma_{k,b}(\Ko)$, such that
\begin{equation*}
 \Div\tau=v\,\text{ and }\, \|\tau\|_{0,\Ko}=\min\{\|\tau\|_{0,\Ko}~:~\Div\tau=v,~\tau\in\Sigma_{k,b}(\Ko)\}.
\end{equation*}
Let $F$ be the affine mapping from reference element $\hat{K}$ to $\Ko$, denoted by $F(\hat{x})=B\hat{x}+b$, where~$B$ is an invertible matrix and $b$ is a translation vector determined by $\Ko$.
Let~$\hat{\tau}\in\Sigma_{k,b}(\hat{K})$ be~$\hat{\tau}(\hat{x}):=B^{-1}\tau(x)B^{-T}$ with $x=F(\hat{x})$.
A direct computation shows that~$
\Div\tau(x)=B\HatDiv\hat{\tau}(\hat{x})$.
The invertibility of $B$ and the definition of $\tau$ imply that~$\|\HatDiv\hattau\|_{0,\hat{K}}$ is a well-defined norm. Applying the norm equivalence theorem (valid for finite-dimensional spaces) and a scaling argument, we derive~$\|\tau\|_{1,\Ko}\lesssim\|\Div\tau\|_{0,\Ko}=\|v\|_{0,\Ko}$, which completes the proof.
\end{proof}

By \cite[Lemma 3.1]{hu2015finite}, for any $\vh\in \Vho$, there exists a $\tauh\in\Sigho\cap H^1(\Omo;\bbS)$ satisfying the orthogonality condition:
\begin{equation}\label{trihz::orthogonalityPrp}
    (\Div\tauh-\vh,p)_{\Ko}=0,\quad\text{for all}~p\in R(\Ko),~\Ko\in\Tho,
\end{equation}
with $\|\tauh\|_{H(\Div,\Omo)}\lesssim \|\vh\|_{0,\Omo}$.
One observation is that the stability persists when replacing the $H(\Div)$-norm with the stronger~$H^1$-norm, i.e., $\|\tauh\|_{1,\Omo}\lesssim\|\vh\|_{0,\Omo}$.
This stems from the proof in~\cite[Lemma 3.1]{hu2015finite} where both parts of $\tauh$ (interpolation part and correction part) can be controlled by $\|\vh\|_{0,\Omo}$.
These results establish the following lemma to control the rigid motion part of $\vh$.
\begin{lemma}\label{trihz::lemconT}
    For any $\vh\in \Vho$, there exists a $\tauh\in\Sigho\cap H^1(\Omo;\bbS)$ such that
    \begin{equation*}
        (\Div\tauh-\vh, p)_{\Ko} =0\quad\text{ and }\quad \|\tauh\|_{1,\Omo}\lesssim \|\vh\|_{0,\Omo},
    \end{equation*}
    for all~$p\in R(\Ko)$ and $\Ko\in\Tho$.
\end{lemma}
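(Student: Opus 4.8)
The plan is to revisit the two-part construction of $\tauh$ underlying \cite[Lemma 3.1]{hu2015finite} and to book-keep the full $H^1$-norm in place of the weaker $H(\Div)$-norm recorded there. Write $\tauh=\tau_I+\tau_C$, with $\tau_I$ the interpolation part and $\tau_C$ the correction part. For $\tau_I$ I would first produce a \emph{continuous} symmetric tensor $\sigma\in H^1(\Omo;\bbS)$ with $\Div\sigma=\vh$ on $\Omo$ and $\|\sigma\|_{1,\Omo}\lesssim\|\vh\|_{0,\Omo}$. Such a $\sigma$ comes from a bounded $H^1$ right inverse of the divergence on symmetric tensors: extend $\vh$ by zero to a fixed smooth ball $B\supset\Omo$, solve the homogeneous-displacement elasticity problem on $B$ to obtain $u\in H^2(B;\bbRn)$ with $\|u\|_{2,B}\lesssim\|\vh\|_{0,\Omo}$, and set $\sigma:=\maA^{-1}\varepsilon(u)|_{\Omo}$; full $H^2$-regularity on the ball keeps the constant $h$-independent. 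I then set $\tau_I:=\Isz\sigma\in\Sigho\cap H^1(\Omo;\bbS)$, the Scott--Zhang interpolation into the continuous component of $\Sigho$, whose $H^1$-stability yields $\|\tau_I\|_{1,\Omo}\lesssim\|\sigma\|_{1,\Omo}\lesssim\|\vh\|_{0,\Omo}$.

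The interpolation $\Isz$ does not preserve the rigid-motion moments, so $\tau_I$ alone need not satisfy the orthogonality, and I would build $\tau_C$ element by element to absorb the residual. Since $\varepsilon(p)=0$ for $p\in R(\Ko)$, integration by parts recasts the target condition as
\begin{equation*}
\langle\tau_C\,\nu,p\rangle_{\partial\Ko}=(\vh-\Div\tau_I,p)_{\Ko}=\langle(\sigma-\tau_I)\nu,p\rangle_{\partial\Ko},\quad p\in R(\Ko),
\end{equation*}
a system of three prescribed normal-trace moments per triangle. I would realize these moments by a continuous $P_k$ symmetric tensor assembled from edge and vertex degrees of freedom so that global $H^1$-conformity is retained, and then bound $\|\tau_C\|_{1,\Ko}$ by the data through a scaling and finite-dimensional norm-equivalence argument on the reference element, exactly the mechanism used in Lemma \ref{trihz::lembubblespacepro}. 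Trace and Scott--Zhang estimates control the right-hand moments, and together with the scaling they give $\|\tau_C\|_{1,\Ko}\lesssim|\sigma|_{1,\omega_{\Ko}}$, where $\omega_{\Ko}$ is the patch of $\Ko$; summation then yields $\|\tau_C\|_{1,\Omo}\lesssim\|\sigma\|_{1,\Omo}\lesssim\|\vh\|_{0,\Omo}$.

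Setting $\tauh=\tau_I+\tau_C$, the orthogonality $(\Div\tauh-\vh,p)_{\Ko}=0$ holds by construction, and the triangle inequality delivers $\|\tauh\|_{1,\Omo}\lesssim\|\vh\|_{0,\Omo}$. The main obstacle is precisely the upgrade from the $H(\Div)$-bound to the $H^1$-bound: the original argument only needed to control $\Div\tau_C$, whereas here the entire gradient of the correction must be dominated by the same $L^2$-data \emph{without losing powers of $h$}. This is where the scaling and norm-equivalence step is essential, and it hinges on the prescribed moments carrying the correct $h$-scaling through the trace and Scott--Zhang estimates; by contrast, the interpolation part is comparatively routine once the $H^1$-bounded right inverse of the divergence and the $H^1$-stability of $\Isz$ are in hand.
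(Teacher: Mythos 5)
Your proposal is correct and follows essentially the same route as the paper: the paper simply cites \cite[Lemma 3.1]{hu2015finite} and observes that both parts of the construction there --- the Scott--Zhang-type interpolation of a stable $H^1$ right inverse of the divergence and the edge-moment correction restoring orthogonality to rigid motions --- are already controlled in $H^1$ by $\|\vh\|_{0,\Omo}$, which is exactly the two-part decomposition $\tauh=\tau_I+\tau_C$ you reconstruct in detail. Your explicit bookkeeping (elasticity solve on a ball for the preimage, consistency of the single-valued edge moments, and the scaling argument giving $\|\tau_C\|_{1,\Ko}\lesssim|\sigma|_{1,\omega_{\Ko}}$) is a valid expansion of the observation the paper makes in one line.
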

Now we prove Theorem \ref{trihz::importBB} in two steps.
First, select $\tau_1\in\Sigho$ via Lemma \ref{trihz::lemconT}, such that 
\begin{equation*}
(\vh-\Div\tau_1)|_{\Ko}\in R^\perp(\Ko),\quad\text{for all}~\Ko\in\Tho.
\end{equation*}
Second, by Lemma \ref{trihz::lembubblespacepro}, there exists a $\tau_2\in\Sigho$ with $\tau_2|_{\Ko}\in\Sigma_{k,b}(\Ko)$ satisfying
\begin{equation*}
\Div\tau_2=\vh-\Div\tau_1\quad\text{ and }\quad \|\tau_2\|_{1,\Tho}\lesssim \|\vh-\Div\tau_1\|_{0,\Omo}.
\end{equation*}
The sum $\tauh=\tau_1+\tau_2$ then verifies
\begin{equation*}
\Div\tauh=\vh\quad\text{ and }\quad \|\tauh\|_{1,\Tho}\lesssim \|v\|_{0,\Omo},
\end{equation*}
which completes the proof of Theorem \ref{trihz::importBB}.

We are in the position to show the discrete inf-sup condition \eqref{chap::well::B} for $m>1$.
\begin{theorem}\label{chap::DisInfSup}
For any $v\in \Vhm$, there exists a $\tau\in\Sighm$ such that for all sufficiently small mesh size $h$,
\begin{equation}\label{curveInfSup}
\bbm(\tau,v)\gtrsim\|{v}\|_{0,\Omm}^2\quad \text{and}\quad\|{\tau}\|_{1,\Thm}\lesssim\|{v}\|_{0,\Omm},
\end{equation}
where all constants are independent of both $h$ and $m$. 
Consequently, the inf-sup condition~\eqref{chap::well::B} holds.
\end{theorem}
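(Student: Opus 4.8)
The plan is to transfer the problem from the curved mesh $\Thm$ back to the straight-edged mesh $\Tho$, where the sharpened stability result of Theorem~\ref{trihz::importBB} is available, and then to control the discrepancy between the two meshes by the comparison estimate \eqref{bmldiff}. Concretely, given $v\in\Vhm$, I would set $\hatv:=v\circ\Fm$, which lies in $\Vho$ by the very definition \eqref{VhKm} of $\Vhm$; recall that $\Fm=\Phiom$ is the piecewise polynomial mapping from $\Omo$ to $\Omm$. Theorem~\ref{trihz::importBB} then furnishes $\hattau\in\Sigho$ with $\Div\hattau=\hatv$ on $\Omo$ and, crucially, the \emph{broken $H^1$}-bound $\|\hattau\|_{1,\Tho}\lesssim\|\hatv\|_{0,\Omo}$.

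Next I would push $\hattau$ forward to the curved mesh by defining $\tau:=\hattau\circ(\Fm)^{-1}$. Writing $\hattau=\hattau_c+\hattau_b$ for the continuous and bubble parts, composition with $(\FKm)^{-1}$ gives $\tau_c|_{\Km}\circ\FKm=\hattau_c|_{\Ko}\in P_k(\Ko;\bbS)$ and $\tau_b|_{\Km}\circ\FKm=\hattau_b|_{\Ko}\in\Sigma_{k,b}(\Ko)$; since $\Fm$ is the identity on interior edges, the continuous part $\tau_c$ remains globally $H^1$ and the bubble flux still vanishes across interior edges, so that $\tau\in\Sighm$. The elementwise norm equivalence \eqref{chap::pre::L2norm} (applied with $l=1$ and $s=1,0$) then immediately yields the second assertion in \eqref{curveInfSup}:
\begin{equation*}
\|\tau\|_{1,\Thm}\approx\|\hattau\|_{1,\Tho}\lesssim\|\hatv\|_{0,\Omo}\approx\|v\|_{0,\Omm}.
\end{equation*}

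The substantive step is the lower bound $\bbm(\tau,v)\gtrsim\|v\|_{0,\Omm}^2$. Here I would invoke \eqref{bmldiff} with $l=1$ and $\Phi=\Phiom$ (legitimate since $\tau\in\Sighm\cap H^1(\Thm;\bbS)$), together with $\bbo(\hattau,\hatv)=(\Div\hattau,\hatv)_{\Omo}=\|\hatv\|_{0,\Omo}^2$, to write
\begin{equation*}
\bbm(\tau,v)\geq\bbo(\hattau,\hatv)-\big|\bbm(\tau,v)-\bbo(\hattau,\hatv)\big|\geq\|\hatv\|_{0,\Omo}^2-Ch\,\|\HatNab_h\hattau\|_{0,\OmoS}\|\hatv\|_{0,\OmoS}.
\end{equation*}
Bounding the error term crudely by $\|\HatNab_h\hattau\|_{0,\OmoS}\leq\|\hattau\|_{1,\Tho}\lesssim\|\hatv\|_{0,\Omo}$ and $\|\hatv\|_{0,\OmoS}\leq\|\hatv\|_{0,\Omo}$ gives $\bbm(\tau,v)\geq(1-Ch)\|\hatv\|_{0,\Omo}^2$, so for $h$ small enough the first inequality in \eqref{curveInfSup} holds with constant $\tfrac12$. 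All constants depend only on the piecewise $C^{r+1}$ data of $\partial\Omega$ (through Theorem~\ref{thcur} and \eqref{bmldiff}) and on the straight-mesh constant of Theorem~\ref{trihz::importBB}, hence are independent of $h$ and $m$.

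I expect the main obstacle to be precisely this absorption of the geometric consistency error into the leading term $\|\hatv\|_{0,\Omo}^2$: it is essential that Theorem~\ref{trihz::importBB} supplies the broken $H^1$ bound on $\hattau$ rather than merely the $H(\Div)$ bound \eqref{oldStablity}, because the error factor in \eqref{bmldiff} is measured in $\|\HatNab_h\hattau\|_{0,\OmoS}$, which the $H(\Div)$-norm alone cannot control. Finally, to recover the stated form \eqref{chap::well::B} I would use $\|\tau\|_{H(\Div,\Omm)}\lesssim\|\tau\|_{1,\Thm}$ (the divergence being dominated by the full elementwise gradient), whence
\begin{equation*}
\sup_{0\neq\tau\in\Sighm}\frac{\bbm(\tau,v)}{\|\tau\|_{H(\Div,\Omm)}}\gtrsim\frac{\|v\|_{0,\Omm}^2}{\|\tau\|_{1,\Thm}}\gtrsim\|v\|_{0,\Omm},
\end{equation*}
which completes the argument.
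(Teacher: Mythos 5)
Your proposal is correct and follows essentially the same route as the paper's proof: pull $v$ back to $\Tho$ via $\Fm$, invoke the strengthened broken-$H^1$ inf-sup condition of Theorem~\ref{trihz::importBB}, push the resulting $\hattau$ forward by $(\Fm)^{-1}$, and absorb the $\mathcal{O}(h)$ geometric consistency error from \eqref{bmldiff} using the norm equivalence \eqref{chap::pre::L2norm}. Your added verifications---that $\tau$ indeed lies in $\Sighm$ and that the broken $H^1$ bound (rather than the $H(\Div)$ bound \eqref{oldStablity}) is exactly what makes the error term absorbable---are correct and match the paper's surrounding discussion.
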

\begin{proof}
For any $v\in \Vhm$, take $\hatv=v\circ \Fm\in \Vho$. By Theorem \ref{trihz::importBB}, there is a $\hattau\in\Sigho$ satisfying
\begin{equation}
    \label{T1B}
    \HatDiv\hattau=\hatv\quad \text{and}\quad \|{\hattau}\|_{1,\Tho}\leq C\|{\hatv}\|_{0,\Omo}.
\end{equation}
Choose $\tau=\hattau\circ(\Fm)^{-1}$, then it follows from \eqref{bmldiff} and \eqref{chap::pre::L2norm} with $l=1$ that
\begin{equation*}
\begin{aligned}
\bbm(\tau,v)
&\geq \|\hatv\|_{0,\Omo}^2-Ch\|\hattau\|_{1,\Tho}\|\hatv\|_{0,\Omo}\geq (1-Ch)\|v\|_{0,\Omm}^2.
\end{aligned}
\end{equation*}
For sufficiently small $h$, we obtain the first inequality in~\eqref{curveInfSup}. The second inequality in~\eqref{curveInfSup} follows from
$
\|\tau\|_{1,\Thm}\lesssim\|\hattau\|_{1,\Tho}\lesssim\|v\|_{0,\Omm},
$
which completes the proof.
\end{proof}

\begin{remark}
The discrete mixed formulation \eqref{mixpuredisplacement} can be reformulated in terms of the monolithic bilinear form:
\begin{equation}\label{mixfemA}
    \bbAm(\sigmah,\uh;\tauh,\vh)=-(\tilde{f},\vh)_{\Omm},
\end{equation}
where $\bbAm(\sigmah,\uh;\tauh,\vh) := \aam(\sigmah,\tauh) + \bbm(\tauh,\uh) + \bbm(\sigmah,\vh)$.
The K-ellipticity~\eqref{chap::well::K} and the discrete inf-sup condition \eqref{chap::well::B} yield the stability,
\begin{equation}\label{prioriEst}   \|\sigmah\|_{H(\Div,\Omm)}+\|\uh\|_{0,\Omm}\lesssim\sup_{(\tauh,\vh)\in\Sighm\times\Vhm}\frac{\bbAm(\sigmah,\uh;\tauh,\vh)}{\|\tauh\|_{H(\Div,\Omm)}+\|\vh\|_{0,\Omm}}.
\end{equation} 
This inequality will be used in the error estimate derived in the next section.
\end{remark}

\section{Error analysis}\label{erroranalysis}
The error analysis of the curved Hu-Zhang element method faces an intrinsic difficulty: the divergence space of the curved Hu-Zhang element is not contained in the discrete displacement space (i.e.,~$\Div\Sighm\not\subseteq\Vhm$),
which precludes a direct error estimate of stress in the $L^2$-norm through standard arguments like those for simplicial meshes in \cite[Remark 3.1]{hu2015finite}.
To circumvent this, we introduce mesh-dependent norms in the $L^2$-error analysis. 
It is noteworthy that such non-inclusion property introduces an error which reduces the convergence rate of stress by half an order.
Crucially, the suboptimality, rooted in the incompatibility of the approximation space, can be improved by increasing degree of the polynomial on boundary elements in the discretization scheme.
This adjustment restores the optimal convergence rate, as numerically demonstrated in the next section.

Compared with the case that $\Omega$ is a polygonal domain, the geometric approximation of the curved boundary $\partial\Omega$ introduces a consistency error, which in turn deteriorates the final convergence rate of the finite element solution.
To analyze such a consistency error,
we will use the following inequality, which corresponds to $s=\frac{1}{2}$ in~\eqref{OmmSBys}:
\begin{equation}\label{OmmSBysused}
    \|v\|_{0,\OmmS}\lesssim h^{\frac{1}{2}}\|v\|_{\frac{1}{2},\Omm},\quad \text{for all}~v\in H^{\frac{1}{2}}(\Omm).
\end{equation}

\subsection{The error estimate by the geometric approximation}
We begin with estimating the error caused by the geometric approximation, through substituting the exact solution into the variational formulation defined on the approximate domain $\Omm$.
\begin{lemma}\label{geoError}
    Recall the mapping $\Psi^m:\Omm\to\Omega$ with $\Psi^m_K:=\Psi^m|_K$ from Section~\ref{CTR}. 
    Let~$(\sigma,u)$ be the solution of \eqref{eq1} on the physical domain $\Omega$.
    Denote by $\hatsigma:=\sigma\circ\Psi^m$ and $\hatu:=u\circ\Psi^m$ the pull-back of $\sigma$ and $u$, respectively.
    Suppose~$u\in H^{s+1}(\Omega;\bbRn)$ with $s\geq\frac{3}{2}$, such that $\sigma\in H^{s}(\Omega;\bbS)$.
    Then we obtain the equation for the error of the geometric approximation:
    \begin{equation}\label{geoerr1}
    \bbAm(\hatsigma,\hatu;\hattau,\hatv)=-(\tilde{f},\hatv)_{\Omm}+E(\hattau,\hatv),
    \end{equation}
    for all $\hattau=\tau\circ\Psi^m$ and $\hatv=v\circ\Psi^m$ with $(\tau,v)\in\Sigma\times V$, 
    where
    \begin{equation}\label{geoerr2}
        |E(\hat{\tau},\hatv)|\lesssim h^{m+\frac{1}{2}}(\|\hattau\|_{0,\OmmS}+\|\hatv\|_{0,\OmmS})\|u\|_{H^{\frac{5}{2}}(\Omega)}.
    \end{equation}
\end{lemma}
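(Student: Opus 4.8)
The plan is to expand $\bbAm(\hatsigma,\hatu;\hattau,\hatv)=\aam(\hatsigma,\hattau)+\bbm(\hattau,\hatu)+\bbm(\hatsigma,\hatv)$ and transfer each piece to the exact forms $a(\cdot,\cdot)$, $b(\cdot,\cdot)$ on $\Omega$ via the comparison estimates \eqref{amldiff}--\eqref{bmldiffMo}, applied with the mapping $\Phi=\Psim=\Phi^{m\infty}:\Omm\to\Omega$ (i.e.\ taking the source order of that lemma equal to the current $m$ and its target order to be $\infty$), so that the pullbacks are exactly $\hatsigma=\sigma\circ\Psim$, $\hattau=\tau\circ\Psim$, $\hatv=v\circ\Psim$. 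Writing $\aam(\hatsigma,\hattau)=a(\sigma,\tau)+R_a$, $\bbm(\hattau,\hatu)=b(\tau,u)+R_{b,1}$, and $\bbm(\hatsigma,\hatv)=b(\sigma,v)+R_{b,2}$, the exact identities \eqref{eq1} give $a(\sigma,\tau)+b(\tau,u)=0$ and $b(\sigma,v)=-(f,v)$, so the transferred forms sum to $-(f,v)_\Omega$. A change of variables $y=\Psim(x)$, using $\tilde f=f\circ\Psim\det(\nabla\Psim)$ and $\hatv=v\circ\Psim$, shows $(\tilde f,\hatv)_\Omm=(f,v)_\Omega$ exactly. Hence $\bbAm(\hatsigma,\hatu;\hattau,\hatv)=-(\tilde f,\hatv)_\Omm+E(\hattau,\hatv)$ with $E=R_a+R_{b,1}+R_{b,2}$, and the lemma reduces to bounding these three residuals.

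Which comparison estimate applies to each $b$-term is dictated by regularity, and this is the delicate bookkeeping. In $R_{b,2}=\bbm(\hatsigma,\hatv)-b(\sigma,v)$ the first slot $\hatsigma$ is a pullback of the smooth exact stress while $\hatv$ is only $L^2$; I would therefore use \eqref{bmldiff}, whose hypothesis requests $H^1$-regularity of the first slot, obtaining $|R_{b,2}|\lesssim h^m\|\HatNab_h\hatsigma\|_{0,\OmmS}\|\hatv\|_{0,\OmmS}$. In $R_{b,1}=\bbm(\hattau,\hatu)-b(\tau,u)$ the slots are reversed: $\hattau$ is only in $H(\Div)$, so \eqref{bmldiff} is unavailable and one must instead invoke \eqref{bmldiffMo}, which asks only for $H^1$-regularity of the second slot $\hatu$. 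The term $R_a$ is controlled directly by \eqref{amldiff}, giving $|R_a|\lesssim h^m\|\hatsigma\|_{0,\OmmS}\|\hattau\|_{0,\OmmS}$.

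The extra factor $h^{1/2}$ beyond the $h^m$ supplied by the comparison lemma is harvested from the smooth factors. Transferring to $\Omega$ by the norm equivalence \eqref{chap::pre::L2norm} gives $\|\hatsigma\|_{0,\OmmS}\lesssim\|\sigma\|_{0,\Psim(\OmmS)}$, $\|\HatNab_h\hatsigma\|_{0,\OmmS}\lesssim\|\nabla\sigma\|_{0,\Psim(\OmmS)}$ and $\|\HatNab_h\hatu\|_{0,\OmmS}\lesssim\|\nabla u\|_{0,\Psim(\OmmS)}$, where $\Psim(\OmmS)$ is a boundary strip of $\Omega$ of width $\lesssim h$ on which $\sigma$ and $\nabla\sigma$ are genuinely globally $H^{1/2}$ (this sidesteps the fact that $\HatNab_h\hatsigma$ need not be globally $H^{1/2}$ across interior edges). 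The $m=\infty$ instance of \eqref{OmmSBysused} then produces factors $h^{1/2}\|\sigma\|_{\frac12,\Omega}$, $h^{1/2}\|\sigma\|_{\frac32,\Omega}$ and $h^{1/2}\|u\|_{\frac32,\Omega}$, and the constitutive relation $\maA\sigma=\varepsilon(u)$ bounds $\|\sigma\|_{\frac32,\Omega}\lesssim\|u\|_{\frac52,\Omega}$; this is precisely where the assumption $s\ge\frac32$ (ensuring $\nabla\sigma\in H^{1/2}$) is used. Leaving the test-function factors $\|\hattau\|_{0,\OmmS}$ and $\|\hatv\|_{0,\OmmS}$ untouched and summing the three residuals delivers \eqref{geoerr2}.

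The main obstacle is the boundary contribution in \eqref{bmldiffMo} entering $R_{b,1}$, namely the term proportional to $\big(\sum_{\Em\in\EPhm}h_{\Em}\|\hattau\hat n\|_{L^2(\Em)}^2\big)^{1/2}\big(\sum_{\Em\in\EPhm}h_{\Em}^{-1}\|\hatu\|_{L^2(\Em)}^2\big)^{1/2}$, in which $\|\hattau\hat n\|_{L^2(\Em)}$ cannot be absorbed into $\|\hattau\|_{0,\OmmS}$. The resolution is the homogeneous displacement boundary condition built into \eqref{eq1}: the exact displacement satisfies $u=0$ on $\partial\Omega$, and since $\Psim$ maps each boundary edge $\Em\in\EPhm$ into $\partial\Omega$, the pullback obeys $\hatu|_{\Em}=0$. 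The second factor therefore vanishes identically and the whole boundary term drops out, leaving only the interior piece $h^m\|\hattau\|_{0,\OmmS}\|\HatNab_h\hatu\|_{0,\OmmS}$, to which the half-power argument of the previous paragraph applies.
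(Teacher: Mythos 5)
Your proof is correct and takes essentially the same route as the paper: the same three-term decomposition of $E(\hattau,\hatv)$, the comparison estimates \eqref{amldiff}--\eqref{bmldiffMo} applied with $(m,l)\to(\infty,m)$ so that $\Phi=\Psim$, the boundary condition $u|_{\partial\Omega}=0$ to annihilate the edge term in \eqref{bmldiffMo}, and \eqref{chap::pre::L2norm} together with the strip estimate \eqref{OmmSBysused} to harvest the extra factor $h^{\frac{1}{2}}$. Your extra care in transferring to $\Omega$ before invoking the half-order strip bound (so that $\nabla\sigma$ is genuinely globally $H^{\frac{1}{2}}$, sidestepping the broken regularity of $\HatNab_h\hatsigma$) only makes explicit a step the paper leaves implicit.
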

\begin{proof}
    The variational formulation \eqref{eq1} and the definition of $\bbAm$ lead to
    \begin{equation*}
    \begin{aligned}
        E(\hat{\tau},\hatv)&=\left[\aam(\hatsigma,\hattau)-a(\sigma,\tau)\right]+ \left[\bbm(\hattau,\hatu) - b(\tau,u)\right]+\left[\bbm(\hatsigma,\hatv) - b(\sigma,v)\right].
    \end{aligned}
    \end{equation*}
    Using the estimates from \eqref{amldiff}, \eqref{bmldiff} and \eqref{bmldiffMo} through the parameter substitution~$(m,l)\to(\infty, m)$, and applying the boundary condition $u|_{\partial\Omega}=0$, we derive the consistency error:
    \begin{equation*}
     |E(\hat{\tau},\hatv)|\lesssim h^m(\|\hattau\|_{0,\OmmS}+\|\hatv\|_{0,\OmmS})(\|\hatsigma\|_{H^1(\OmmS)}+\|\hat{u}\|_{H^1(\OmmS)}).
    \end{equation*}
    This, combined with \eqref{chap::pre::L2norm} and \eqref{OmmSBysused}, completes the proof.
\end{proof}

For technical reasons, we impose a relatively strong regularity assumption.
In fact, similar results can still be obtained when the solution has lower regularity, with a possible loss of half an order. 
Therefore, in the following analysis, we always assume that $u\in H^{s+1}(\Omega;\bbRn)$ with $s \geq \frac{3}{2}$.

\subsection{The error estimate of stress in the \texorpdfstring{$H(\Div)$}{\textit{H(div)}}-norm}
Lemma \ref{geoError} establishes a direct method for estimating the errors of stress in the~$H(\Div)$-norm and displacement in the $L^2$-norm.
\begin{theorem}\label{standErrorThe}
Let $(\sigma, u)$ be the solution of \eqref{eq1},
and $(\hatsigmah,\hatuh)\in\Sighm\times\Vhm$ be the solution of \eqref{mixfemA}.
Then, the following error estimate holds:
    \begin{equation}\label{standError}
    \begin{aligned}
    \|\hatsigma-\hatsigmah\|_{H(\Div,\Omm)}+\|\hatu-\hatuh\|_{0,\Omm}&\lesssim h^q\|u\|_{H^{q+2}(\Omega)},
    \end{aligned}
    \end{equation}
    with the convergence rate $q = \min\{k, m+\frac{1}{2}, s-1\}$.
\end{theorem}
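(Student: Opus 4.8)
The plan is to run the standard stability-plus-consistency argument for mixed methods, splitting the error through the interpolants $\Pih\hatsigma\in\Sighm$ and $\Qh\hatu\in\Vhm$ and controlling the discrete part with the a priori bound \eqref{prioriEst}. First I would write $\hatsigma-\hatsigmah=(\hatsigma-\Pih\hatsigma)+(\Pih\hatsigma-\hatsigmah)$ and similarly for the displacement. The interpolation contributions $\|\hatsigma-\Pih\hatsigma\|_{H(\Div,\Omm)}$ and $\|\hatu-\Qh\hatu\|_{0,\Omm}$ are bounded directly by \eqref{interErrorPi} and \eqref{interErrorQh} (using $\|\Div(\hatsigma-\Pih\hatsigma)\|_{0,\Omm}\leq\|\hatsigma-\Pih\hatsigma\|_{1,\Thm}$ for the divergence part), and the norm equivalence \eqref{chap::pre::L2norm} converts the norms of $\hatsigma,\hatu$ on $\Omm$ into those of $\sigma,u$ on $\Omega$. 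Choosing the Sobolev index equal to $q+1$ turns these into $h^{q}\|u\|_{H^{q+2}(\Omega)}$, which is the target rate for the interpolation part.

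For the discrete part I would apply \eqref{prioriEst} to $(\Pih\hatsigma-\hatsigmah,\Qh\hatu-\hatuh)$ and estimate the numerator $\bbAm(\Pih\hatsigma-\hatsigmah,\Qh\hatu-\hatuh;\tauh,\vh)$ for arbitrary $(\tauh,\vh)\in\Sighm\times\Vhm$. Subtracting the discrete equation \eqref{mixfemA} and inserting the consistency identity \eqref{geoerr1} of Lemma~\ref{geoError} yields the quasi-Galerkin orthogonality
\begin{equation*}
\bbAm(\Pih\hatsigma-\hatsigmah,\Qh\hatu-\hatuh;\tauh,\vh)=-\bbAm(\hatsigma-\Pih\hatsigma,\hatu-\Qh\hatu;\tauh,\vh)+E(\tauh,\vh).
\end{equation*}
A point that needs justification is that Lemma~\ref{geoError}, stated for test functions pulled back from $\Sigma\times V$, applies to the discrete pair $(\tauh,\vh)$: since $\Psim$ from \eqref{defPsim} is the identity on every interior edge, the pushforward $\tauh\circ(\Psim)^{-1}$ preserves the normal continuity of $\tauh$ and therefore lies in $H(\Div,\Omega;\bbS)=\Sigma$, so \eqref{geoerr2} gives $|E(\tauh,\vh)|\lesssim h^{m+\frac12}(\|\tauh\|_{H(\Div,\Omm)}+\|\vh\|_{0,\Omm})\|u\|_{H^{5/2}(\Omega)}$, a contribution of order $m+\tfrac12\geq q$.

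It then remains to bound the three pieces of $\bbAm(\hatsigma-\Pih\hatsigma,\hatu-\Qh\hatu;\tauh,\vh)=\aam(\hatsigma-\Pih\hatsigma,\tauh)+\bbm(\tauh,\hatu-\Qh\hatu)+\bbm(\hatsigma-\Pih\hatsigma,\vh)$. The first is a plain $L^2$ pairing, $\lesssim\|\hatsigma-\Pih\hatsigma\|_{0,\Omm}\|\tauh\|_{0,\Omm}$, and the third equals $(\Div(\hatsigma-\Pih\hatsigma),\vh)_{\Omm}\lesssim\|\hatsigma-\Pih\hatsigma\|_{1,\Thm}\|\vh\|_{0,\Omm}$; both are of order at least $q$ by the interpolation estimates. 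The delicate middle term is exactly where the failure of the inclusion $\Div\Sighm\not\subseteq\Vhm$ enters, since $\bbm(\tauh,\hatu-\Qh\hatu)$ does not vanish as it would on straight meshes. Here I would insert $\Qh$ and use its element-wise $L^2$-orthogonality to write $\bbm(\tauh,\hatu-\Qh\hatu)=\sum_{\Km\in\TPhm}((\idI-\Qh)\Div\tauh,\hatu-\Qh\hatu)_{\Km}$, the interior elements dropping out because there $\Div\tauh|_{\Km}\in\Vhm|_{\Km}$. Applying Lemma~\ref{IPhdinvBoundLemma} with the bound $\|(\idI-\Qh)\Div\tauh\|_{0,\Km}\lesssim\|\tauh\|_{0,\Km}$, Cauchy--Schwarz over the boundary strip $\OmmS$, and \eqref{interErrorQh} gives $|\bbm(\tauh,\hatu-\Qh\hatu)|\lesssim h^{\min\{s+1,k\}}\|u\|_{H^{s+1}(\Omega)}\|\tauh\|_{H(\Div,\Omm)}$, whose rate $\min\{s+1,k\}\geq q$ is again subsumed into $h^{q}\|u\|_{H^{q+2}(\Omega)}$. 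Collecting the four contributions, dividing by $\|\tauh\|_{H(\Div,\Omm)}+\|\vh\|_{0,\Omm}$ and taking the supremum bounds $\|\Pih\hatsigma-\hatsigmah\|_{H(\Div,\Omm)}+\|\Qh\hatu-\hatuh\|_{0,\Omm}$, and the triangle inequality with the interpolation part finishes the estimate \eqref{standError}.

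The main obstacle I expect is precisely the middle term $\bbm(\tauh,\hatu-\Qh\hatu)$: because $\Div\Sighm\not\subseteq\Vhm$ the usual orthogonality that annihilates this term is lost, and one must recover it quantitatively through Lemma~\ref{IPhdinvBoundLemma} and the concentration of the defect in the thin boundary strip $\OmmS$. The reassuring feature to be verified is that for these two norms the term retains order at least $q$, so that, unlike in the $L^2$ stress estimate of Section~\ref{L2errorstress}, no half-order is forfeited here. A secondary technical point is the legitimacy of feeding discrete test functions into the consistency estimate of Lemma~\ref{geoError}, which is settled by the identity property of $\Psim$ on interior edges noted above.
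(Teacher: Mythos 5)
Your proposal is correct and follows essentially the same route as the paper: the same interpolants $\Pih\hatsigma$ and $\Qh\hatu$, the same quasi-Galerkin identity obtained by combining \eqref{mixfemA} with the consistency relation \eqref{geoerr1}, the stability bound \eqref{prioriEst} together with \eqref{geoerr2}, and a concluding triangle inequality with \eqref{interErrorPi} and \eqref{interErrorQh}. The only deviations are minor: the paper bounds $\bbAm(\hatsigma-\Pih\hatsigma,\hatu-\Qh\hatu;\hattauh,\hatvh)$ simply by continuity in the $H(\Div,\Omm)\times L^2(\Omm)$ norms, so your more elaborate treatment of $\bbm(\hattauh,\hatu-\Qh\hatu)$ via Lemma~\ref{IPhdinvBoundLemma} is sound but unnecessary at this target rate (plain Cauchy--Schwarz with $\|\Div\hattauh\|_{0,\Omm}$ already gives order $\min\{k,s+1\}\geq q$; the non-inclusion $\Div\Sighm\not\subseteq\Vhm$ only becomes critical in the mesh-dependent-norm analysis of Section~\ref{L2errorstress}), while your explicit verification that discrete test functions are legitimate inputs to Lemma~\ref{geoError}, via $\Psim$ being the identity on interior edges, usefully makes precise a step the paper leaves implicit.
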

\begin{proof}
    Starting from the variational formulation in \eqref{mixfemA} and \eqref{geoerr1}, for any $(\hattauh,\hatvh)\in\Sighm\times\Vhm$, we derive the following equation:
    \begin{equation}\label{standErrorImportEq}
         \bbAm(\hatsigmah,\hatuh;\hattauh,\hatvh) = \bbAm(\hatsigma,\hatu;\hattauh,\hatvh) - E(\hattauh,\hatvh).
    \end{equation}
    Take $\hatsigma_I=\Pih\hatsigma$ and $\hatu_I=\Qh\hatu$, where $\Pih$ and $\Qh$ are defined in \eqref{PihmDef} and \eqref{Qhdefine}, respectively.
    Subtracting $\bbAm(\hatsigma_I,\hatu_I;\hattauh,\hatvh)$ from \eqref{standErrorImportEq}, we can get:
    \begin{equation*}
    \bbAm(\hatsigmah-\hatsigma_I,\hatuh-\hatu_I;\hattauh,\hatvh) = \bbAm(\hatsigma-\hatsigma_I,\hatu-\hatu_I;\hattauh,\hatvh) - E(\hattauh,\hatvh).
    \end{equation*}
    This, combined with the stability \eqref{prioriEst} and the consistency error~\eqref{geoerr2}, yields
    \begin{equation*}
    \begin{aligned}
         \|\hatsigmah-\hatsigma_I\|&_{H(\Div,\Omm)}+\|\hatuh-\hatu_I\|_{0,\Omm}\\
         &\lesssim \|\hatsigma-\hatsigma_I\|_{H(\Div,\Omm)} +\|\hatu-\hatu_I\|_{0,\Omm} + h^{m+\frac{1}{2}}\|u\|_{H^{\frac{5}{2}}(\Omega)}.
    \end{aligned}
    \end{equation*}
    The final error estimate \eqref{standError} follows from the triangle inequality and the interpolation error estimates in \eqref{interErrorPi} and \eqref{interErrorQh}.
\end{proof}

\subsection{The error estimates in mesh-dependent norms}\label{L2errorstress}
The lack of the inclusion $\Div\Sighm\not\subseteq\Vhm$ makes it difficult to analyze the error of stress in the $L^2$-norm.
The key ingredient here is to prove that the monolithic bilinear form $\bbAm(\cdot,\cdot;\cdot,\cdot)$ is stable on $\Sighm\times\Vhm$ in the mesh-dependent norms~\eqref{def0hmand1hm} below, where the case for~$m=1$ was proved in \cite[Lemma 3.3]{chen2018fast}.

\subsubsection{The stability in mesh-dependent norms}
For any $\tauh\in\Sighm$ and $\vh\in\Vhm$, define the mesh-dependent norms:
\begin{equation}\label{def0hmand1hm}
    \begin{aligned} \|\tauh\|_{0,h,m}^2:&=\|\tauh\|_{0,\Omm}^2+\sum_{E\in\Fhm}h_E\|\tauh\nu_E\|_{0,E}^2,\\
    |\vh|_{1,h,m}^2:&=\|\varepsilonh(\vh)\|_{0,\Omm}^2+\sum_{E\in\Fhm}h_E^{-1}\|[\![\vh]\!]\|_{0,E}^2.
    \end{aligned}
\end{equation}
Here, $\varepsilonh(\cdot)$ is the element-wise symmetric gradient operator and $[\![\cdot]\!]$ is defined in \eqref{defjump}. 
For $k\geq3$, it follows from \cite[Lemma 3.3]{chen2018fast} that:
 \begin{equation}\label{meshnorms1}
    |\vh|_{1,h,1}\lesssim\sup_{0\neq\tauh\in\Sigho}\frac{\bbo(\tauh,\vh)}{\|\tauh\|_{0,h,1}},\quad \text{for all}~\vh\in\Vho.
\end{equation}
This paper generalizes \eqref{meshnorms1} to curved meshes.
Before proving it, we need the following result which can be obtained by combining \cite[eq.(3.6), eq.(3.8)]{chen2018fast}.
\begin{equation}\label{H1NormBound}
    \|\nabla_h v\|_{0,\Omo}^2+\|v\|_{0,\Omo}^2\lesssim |v|_{1,h,1}^2,\quad \text{for all}~ v\in H^1(\Tho;\bbRn).
\end{equation}

\begin{lemma}
    For $k\geq3$ and $h$ small enough, the following inf-sup condition holds:
    \begin{equation}\label{bmhimport}
    |\vh|_{1,h,m}\lesssim\sup_{0\neq\tauh\in\Sighm}\frac{\bbm(\tauh,\vh)}{\|\tauh\|_{0,h,m}},\quad \text{for all}~ \vh\in\Vhm.
\end{equation}
\end{lemma}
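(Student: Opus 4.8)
The plan is to transplant the flat-mesh inf-sup condition \eqref{meshnorms1} to the curved triangulation $\Thm$ through the global map $\Fm$ of \eqref{defFm}, controlling the discrepancy between $\bbm$ and $\bbo$ by \eqref{bmldiffMo}. Given $\vh\in\Vhm$, I would set $\hatvh:=\vh\circ\Fm\in\Vho$, which is exactly the membership encoded in \eqref{VhKm}. Applying \eqref{meshnorms1} to $\hatvh$, I pick $\hattauh\in\Sigho$ with
\[
\bbo(\hattauh,\hatvh)\gtrsim|\hatvh|_{1,h,1}\,\|\hattauh\|_{0,h,1},
\]
and push it forward by $\tauh:=\hattauh\circ(\Fm)^{-1}\in\Sighm$ to obtain the candidate test stress. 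The argument then splits into (i) comparing the two bilinear forms evaluated at $(\tauh,\vh)$ and $(\hattauh,\hatvh)$, and (ii) comparing the mesh-dependent norms of $\tauh,\vh$ with those of $\hattauh,\hatvh$.

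For step (ii) I only need the one-sided bounds $\|\tauh\|_{0,h,m}\lesssim\|\hattauh\|_{0,h,1}$ and $|\vh|_{1,h,m}\lesssim|\hatvh|_{1,h,1}$. For the stress, a scaled discrete trace inequality on each shape-regular (possibly curved) element shows that the edge contribution in \eqref{def0hmand1hm} is dominated by the volume term, so $\|\tauh\|_{0,h,m}\approx\|\tauh\|_{0,\Omm}$ and $\|\hattauh\|_{0,h,1}\approx\|\hattauh\|_{0,\Omo}$, whereupon \eqref{chap::pre::L2norm} with $s=0$ gives equivalence of the two $L^2$-norms. For the displacement I would bound the broken symmetric gradient by the broken full gradient, transfer to the reference element by \eqref{chap::pre::L2norm} with $s=1$, and absorb the result into $|\hatvh|_{1,h,1}$ via \eqref{H1NormBound}; routing through the full gradient deliberately sidesteps any elementwise Korn inequality. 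The jump terms split according to \eqref{defjump}: on interior edges $\Fm$ is the identity (Section~\ref{CTR}), so those contributions are unchanged, while on a curved boundary edge the jump is a trace and $h_{\Em}^{-1}\|\vh\|_{0,\Em}^2\lesssim h_{\Eo}^{-1}\|\hatvh\|_{0,\Eo}^2$ follows from the surface change of variables, $h_{\Em}\approx h_{\Eo}$, and the uniform distortion bounds of $\Fm$ furnished by Theorem~\ref{thcur}.

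For step (i) I would invoke \eqref{bmldiffMo} with $l=1$ applied to $\sigma=\tauh$ and $v=\vh$ (both admissible, since $\tauh\in\Sighm\subset H(\Div,\Omm;\bbS)$ is elementwise $H^1$ and $\vh\in H^1(\Thm;\bbRn)$), and estimate each factor on its right-hand side by a mesh-dependent norm: $\|\hattauh\|_{0,\OmoS}\lesssim\|\hattauh\|_{0,h,1}$, $\|\HatNab_h\hatvh\|_{0,\OmoS}\lesssim|\hatvh|_{1,h,1}$ by \eqref{H1NormBound}, the boundary-edge stress factor is precisely the boundary part of $\|\hattauh\|_{0,h,1}$, and the boundary-edge displacement factor is controlled by the boundary jump part of $|\hatvh|_{1,h,1}$. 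This yields
\[
|\bbm(\tauh,\vh)-\bbo(\hattauh,\hatvh)|\lesssim h\,\|\hattauh\|_{0,h,1}\,|\hatvh|_{1,h,1},
\]
hence $\bbm(\tauh,\vh)\geq(1-Ch)\,|\hatvh|_{1,h,1}\,\|\hattauh\|_{0,h,1}$. Dividing by $\|\tauh\|_{0,h,m}$, applying the norm comparisons of step (ii), and taking $h$ small enough that $1-Ch$ is bounded below by a positive constant gives $\bbm(\tauh,\vh)/\|\tauh\|_{0,h,m}\gtrsim|\vh|_{1,h,m}$, which is \eqref{bmhimport}.

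The main obstacle I expect is step (ii), namely the faithful transfer of the edge terms and the symmetric-gradient term through the non-Piola composition $\tauh=\hattauh\circ(\Fm)^{-1}$: the physical normal $\nu_{\Em}$ and the physical gradient rotate relative to their reference counterparts, so no exact transformation identity is available. The resolution is to never match the curved-edge normal component of the stress directly, but to dominate the whole boundary edge term by the element volume $L^2$-norm via a discrete trace inequality, and to control the symmetric gradient only from above by the full gradient; both reductions use nothing beyond the uniform two-sided Jacobian bounds of Theorem~\ref{thcur} and the finite dimensionality of the local shape-function spaces.
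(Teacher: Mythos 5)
Your proposal is correct and follows essentially the same route as the paper: pull $\vh$ back to the straight mesh via $\Fm$, apply the flat-mesh inf-sup condition \eqref{meshnorms1} to obtain $\hattauh$, push it forward to $\tauh=\hattauh\circ(\Fm)^{-1}$, and absorb the discrepancy $|\bbm(\tauh,\vh)-\bbo(\hattauh,\hatvh)|\lesssim h\|\hattauh\|_{0,h,1}|\hatvh|_{1,h,1}$ using \eqref{bmldiffMo} with $l=1$ together with \eqref{H1NormBound}, which is precisely the paper's estimate \eqref{bmboundedby0hmand1hm}. The only (harmless) deviation lies in the norm comparison: the paper establishes the full two-sided equivalences \eqref{normEquiv} through explicit change-of-variables identities for the edge terms (including the $J^{-T}\nu_{\El}$ correction in \eqref{tauhnuml}), whereas you derive only the one-sided bounds actually needed, dominating the curved-edge stress term by the element volume term via a discrete trace inequality --- a legitimate shortcut here, since the inf-sup condition involves only discrete stresses, for which the trace inequality holds by scaling and the uniform Jacobian bounds of Theorem~\ref{thcur}.
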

\begin{proof}
We require the following three estimates:
\begin{align}
    \label{normEquiv}
    \|\tauh\|_{0,h,m}&\approx\|\hattauh\|_{0,h,1},\quad |\vh|_{1,h,m}\approx|\hatvh|_{1,h,1},\\
    \label{bmboundedby0hmand1hm}
    \bbm(\tauh,\vh)&\geq b^1(\hattauh,\hatvh)-Ch\|\tauh\|_{0,h,m}|\vh|_{1,h,m},
\end{align}
with $\hattauh=\tauh\circ\Philm$, $\hatvh=\vh\circ\Philm$ for $l=1$. Let $J=\HatNab_h\Philm$ be defined element-wise. 
On each element, the following equations hold:
\begin{equation}\label{tauhml}
\|\tauh\|_{0,\Km}^2-\|\hattauh\|_{0,\Kl}^2=(|\hattauh|^2,\det(J)-1)_{\Kl},
\end{equation}
and
\begin{equation}\label{tauhnuml}
\begin{aligned}
    &h_{\Em}\|\tauh\nu_{\Em}\|_{0,\Em}^2-h_{\El}\|\hattauh\nu_{\El}\|_{0,\El}^2\\
    =&(h_{\Em}-h_{\El})\|\hattauh\nu_{\El}\|_{0,\El}^2+h_{\Em}\big(|\hattauh\nu_{\El}|^2,\ |J^{-T}\nu_{\El}|^{-1}\det(J)-1\big)_{\El}\\
    &+h_{\Em}\big(|\hattauh J^{-T}\nu_{\El}|^2-|\hattauh\nu_{\El}|^2,\ |J^{-T}\nu_{\El}|^{-1}\det(J)\big)_{\El}.
\end{aligned}    
\end{equation}
Utilizing the estimate $\|J-\bdI\|_{L^\infty(\Kl)}\lesssim h^l$ from \eqref{curvePhi}, we bound the terms on the right hand side of \eqref{tauhml} and \eqref{tauhnuml}:
\begin{equation*}
\left|\, \|\tauh\|_{0,h,m}^2-\|\hattauh\|_{0,h,l}^2\,\right|\lesssim h^l\|\hattauh\|_{0,h,l}^2.
\end{equation*}
This establishes the first equation in \eqref{normEquiv}.
A similar argument with the discrete Sobolev inequality \eqref{H1NormBound} derives the second equation in \eqref{normEquiv}.
The inequality~\eqref{bmboundedby0hmand1hm} follows from~\eqref{bmldiffMo} and~\eqref{H1NormBound}.
The inf-sup condition \eqref{bmhimport} is obtained by the combination of \eqref{normEquiv},\eqref{bmboundedby0hmand1hm} and~\eqref{meshnorms1}.
\end{proof}

\begin{remark}
From \eqref{chap::pre::L2norm} and the second equation in \eqref{normEquiv}, it follows that \eqref{H1NormBound} also holds over $\Omm$:
    \begin{equation}\label{H1NormBoundm}
    \|\nabla_h v\|_{0,\Omm}^2+\|v\|_{0,\Omm}^2\lesssim |v|_{1,h,m}^2,\quad \text{for all}~ v\in H^1(\Thm;\bbRn).
\end{equation}
\end{remark}

By the uniform boundedness of $\maA$, the trace theorem \cite[Theorem 1.6.6]{brenner2008mathematical} and the inverse estimate, the bilinear form $\aam(\cdot,\cdot)$ is coercive over the full discrete space:
\begin{equation*}
    \aam(\sigmah,\sigmah)\gtrsim \|\sigmah\|_{0,h,m}^2, \quad  \text{for all}~ \sigmah\in\Sighm.
\end{equation*}
Combining this coercivity with the discrete inf-sup condition \eqref{bmhimport}, we obtain the following stability.
\begin{theorem}
    For $k\geq 3$ and $h$ small enough, it holds that 
    \begin{equation}\label{Astabnorm2}
        \|\sigmah\|_{0,h,m}+|\uh|_{1,h,m}\lesssim \sup_{(\tauh,\vh)\in\Sighm\times\Vhm}\frac{\bbAm(\sigmah,\uh;\tauh,\vh)}{\|\tauh\|_{0,h,m}+|\vh|_{1,h,m}},
    \end{equation}
    for any~$(\sigmah,\uh)\in\Sighm\times\Vhm$.
\end{theorem}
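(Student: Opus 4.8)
The plan is to derive \eqref{Astabnorm2} from the two ingredients already in hand, namely the full coercivity of $\aam(\cdot,\cdot)$ over $\Sighm$ in the norm $\|\cdot\|_{0,h,m}$ and the inf-sup condition \eqref{bmhimport} for $\bbm(\cdot,\cdot)$, through the classical Brezzi argument for a symmetric saddle-point form whose leading block is coercive on the \emph{entire} trial space rather than merely on the kernel. Since $\bbAm(\sigmah,\uh;\cdot,\cdot)$ is linear in its test pair, it suffices to exhibit, for each fixed $(\sigmah,\uh)\in\Sighm\times\Vhm$, a single test pair $(\tauh,\vh)$ for which $\bbAm(\sigmah,\uh;\tauh,\vh)\gtrsim\|\sigmah\|_{0,h,m}^2+|\uh|_{1,h,m}^2$ while $\|\tauh\|_{0,h,m}+|\vh|_{1,h,m}\lesssim\|\sigmah\|_{0,h,m}+|\uh|_{1,h,m}$; dividing and taking the supremum then gives \eqref{Astabnorm2}.

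First I would test with $(\tauh,\vh)=(\sigmah,-\uh)$. Using $\bbAm(\sigmah,\uh;\tauh,\vh)=\aam(\sigmah,\tauh)+\bbm(\tauh,\uh)+\bbm(\sigmah,\vh)$, the two $\bbm$-contributions cancel, leaving $\bbAm(\sigmah,\uh;\sigmah,-\uh)=\aam(\sigmah,\sigmah)\gtrsim\|\sigmah\|_{0,h,m}^2$ by coercivity, so this choice controls the stress part. To capture $|\uh|_{1,h,m}$, I would invoke \eqref{bmhimport} to select $\tauh^\ast\in\Sighm$ with $\|\tauh^\ast\|_{0,h,m}\approx|\uh|_{1,h,m}$ and $\bbm(\tauh^\ast,\uh)\gtrsim|\uh|_{1,h,m}^2$, and then test with $(\tauh^\ast,0)$, yielding $\bbAm(\sigmah,\uh;\tauh^\ast,0)=\aam(\sigmah,\tauh^\ast)+\bbm(\tauh^\ast,\uh)$. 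The inf-sup term is bounded below by $|\uh|_{1,h,m}^2$, while the cross term is controlled by the boundedness of $\aam$ together with $\|\cdot\|_{0,\Omm}\le\|\cdot\|_{0,h,m}$, giving $|\aam(\sigmah,\tauh^\ast)|\lesssim\|\sigmah\|_{0,h,m}\|\tauh^\ast\|_{0,h,m}\approx\|\sigmah\|_{0,h,m}|\uh|_{1,h,m}$.

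The two estimates are then combined by testing with the weighted pair $(\tauh,\vh)=(\sigmah+\delta\,\tauh^\ast,\,-\uh)$ for a small parameter $\delta>0$ to be fixed. Linearity in the test slot gives
\begin{equation*}
\bbAm(\sigmah,\uh;\sigmah+\delta\tauh^\ast,-\uh)\gtrsim\|\sigmah\|_{0,h,m}^2+\delta\,|\uh|_{1,h,m}^2-C\delta\,\|\sigmah\|_{0,h,m}|\uh|_{1,h,m},
\end{equation*}
and a Young inequality absorbs the cross term into the two squares; choosing $\delta$ small enough (independently of $h$ and $m$) leaves $\bbAm\gtrsim\|\sigmah\|_{0,h,m}^2+|\uh|_{1,h,m}^2$. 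Since $\|\tauh\|_{0,h,m}+|\vh|_{1,h,m}\le\|\sigmah\|_{0,h,m}+\delta\|\tauh^\ast\|_{0,h,m}+|\uh|_{1,h,m}\lesssim\|\sigmah\|_{0,h,m}+|\uh|_{1,h,m}$, the test pair has the required norm, and \eqref{Astabnorm2} follows.

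Because both the coercivity of $\aam$ over $\Sighm$ and the inf-sup condition \eqref{bmhimport} are already established, there is no genuine analytic obstacle left; the argument is just the standard packaging of these two facts. The only points requiring care are the boundedness of $\aam(\cdot,\cdot)$ with respect to $\|\cdot\|_{0,h,m}$ (immediate, since this norm dominates the $L^2$-norm on $\Omm$ and $\maA$ is uniformly bounded) and the calibration of $\delta$ so that all hidden constants stay independent of $h$ and $m$; as $\delta$ depends only on the fixed coercivity, boundedness, and inf-sup constants, this independence is automatic.
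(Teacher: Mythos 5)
Your proof is correct and is exactly the argument the paper invokes: the paper establishes the coercivity $\aam(\sigmah,\sigmah)\gtrsim\|\sigmah\|_{0,h,m}^2$ on all of $\Sighm$ and the inf-sup condition \eqref{bmhimport}, then states that combining them gives \eqref{Astabnorm2} without spelling out the details. Your explicit construction of the test pair $(\sigmah+\delta\tauh^\ast,-\uh)$, with the $\bbm(\sigmah,\cdot)$ terms cancelling and a Young inequality fixing $\delta$ from the $h$- and $m$-independent coercivity, boundedness, and inf-sup constants, is precisely the standard packaging the paper has in mind.
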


\subsubsection{The error estimate of stress in the \texorpdfstring{$L^2$}{\textit{L\^{}2}}-norm}
The following lemma estimates the error arising from the non-inclusion, i.e., $\Div\Sighm\not\subseteq\Vhm$.
This results in a half-order reduction in the convergence rate of stress compared to the optimal one.
\begin{lemma}
For any $\tau\in\Sighm$, the following estimate holds,
    \begin{equation}\label{bmIpErrorFirst}
        |\bbm(\tau, (\idI-\Qh)u)|\lesssim \min\big\{h|\tau|_{1,\TPhm},\,\|\tau\|_{0,\OmmS}\big\}\|(\idI-\Qh)u\|_{0,\OmmS}.
    \end{equation}
    Moreover, if $u\in H^{s+1}(\Omm;\bbRn)$,
    \begin{equation}\label{bmIpError}
        |\bbm(\tau, (\idI-\Qh)u)|\lesssim  \min\big\{h|\tau|_{1,\TPhm},\,\|\tau\|_{0,\OmmS}\big\}h^q\|u\|_{H^{q}(\Omm)},
    \end{equation}
    with $q=\min\{k+\frac{1}{2},s+1\}$ and $|\tau|_{1,\TPhm}=\big(\sum_{\Km\in\TPhm}|\tau|_{1,\Km}^2\big)^{\frac{1}{2}}$.
\end{lemma}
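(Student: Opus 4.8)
The plan is to convert $\bbm(\tau,(\idI-\Qh)u)$ into a symmetric pairing of two projection defects, so that the control of $(\idI-\Qh)\Div\tau$ already established in Lemma~\ref{IPhdinvBoundLemma} can be coupled with the approximation properties of $\Qh$. Since $\Qh$ in \eqref{Qhdefine} is an $L^2(\Omm)$-orthogonal projection, $\idI-\Qh$ is a self-adjoint idempotent; using this twice (once to move the projection onto $\Div\tau$, and once to note that $(\idI-\Qh)\Div\tau$ annihilates $\Qh u\in\Vhm$) yields the identity
\begin{equation*}
\bbm(\tau,(\idI-\Qh)u)=\big((\idI-\Qh)\Div\tau,\,(\idI-\Qh)u\big)_{\Omm}.
\end{equation*}
This is the decisive reformulation: both factors now carry an approximation gain, and the left factor is exactly the quantity bounded in Lemma~\ref{IPhdinvBoundLemma}.

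First I would localize to the boundary strip. By Lemma~\ref{IPhdinvBoundLemma}, $(\idI-\Qh)\Div\tau$ vanishes identically on every interior element, so only the boundary triangles in $\TPhm$ contribute. Applying the Cauchy--Schwarz inequality element-wise and then across $\TPhm$, together with the bound $\|(\idI-\Qh)\Div\tau\|_{0,\Km}\lesssim\min\{h|\tau|_{1,\Km},\|\tau\|_{0,\Km}\}$, produces the product of $\big(\sum_{\Km\in\TPhm}\min\{h|\tau|_{1,\Km},\|\tau\|_{0,\Km}\}^2\big)^{1/2}$ and $\|(\idI-\Qh)u\|_{0,\OmmS}$. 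Because $\min\{a_K,b_K\}^2$ is bounded termwise by both $a_K^2$ and $b_K^2$, the first factor is controlled by $\min\{h|\tau|_{1,\TPhm},\|\tau\|_{0,\OmmS}\}$, which is precisely \eqref{bmIpErrorFirst}.

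It then remains, for \eqref{bmIpError}, to estimate $\|(\idI-\Qh)u\|_{0,\OmmS}$ with an extra boundary gain. The element-wise estimate \eqref{interErrorQh} at order $\min\{k,s+1\}$ furnishes $\|(\idI-\Qh)u\|_{0,\OmmS}\lesssim h^{\min\{k,s+1\}}\,|u|_{H^{\min\{k,s+1\}}(\OmmS)}$; when the regularity exceeds the polynomial order ($s+1>k$), I would extract an additional factor $h^{\rho}$ with $\rho=\min\{\tfrac12,\,s+1-k\}$ by invoking the thin-strip inequality \eqref{OmmSBys}. The crucial point is that \eqref{OmmSBys} must be applied to the top-order derivatives $D^\alpha u$, which lie in $H^{\rho}(\Omm)$ since $u\in H^{s+1}(\Omm)$, rather than to the defect $(\idI-\Qh)u$ itself. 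Balancing the interpolation order $k$ against the maximal strip gain $\tfrac12$ yields exactly the rate $q=\min\{k+\tfrac12,s+1\}$, completing \eqref{bmIpError}.

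The main obstacle is this last half-order gain. The naive route---applying \eqref{OmmSBys} directly to $(\idI-\Qh)u$---fails because the projection defect is discontinuous across element interfaces and hence does not lie in $H^{1/2}(\Omm)$, so the thin-strip inequality is unavailable for it globally. Routing the gain instead through the genuinely smooth derivatives of $u$ repairs this, and it is precisely this rerouting that forces the regularity threshold $k+\tfrac12$ to appear in $q$, thereby capping the attainable rate at half an order below optimal.
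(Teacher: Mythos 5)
Your proposal is correct and follows essentially the same route as the paper: the orthogonality of $\Qh$ reduces $\bbm(\tau,(\idI-\Qh)u)$ to $\sum_{\Km\in\TPhm}((\idI-\Qh)\Div\tau,(\idI-\Qh)u)_{\Km}$, which is then handled by Cauchy--Schwarz together with Lemma~\ref{IPhdinvBoundLemma}, and the rate $q=\min\{k+\frac12,s+1\}$ comes from combining \eqref{interErrorQh} with the thin-strip estimate \eqref{OmmSBysused}. Your explicit observation that the strip inequality must be routed through the smooth derivatives of $u$ rather than the discontinuous defect $(\idI-\Qh)u$ is a correct elaboration of a step the paper leaves implicit.
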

\begin{proof}
    By the projection property of $\Qh$ and $(\idI-\Qh)\Div\tau=0$ on all interior elements, we derive
    \begin{equation*}
    \begin{aligned}
        \bbm(\tau,(\idI-\Qh)u)=\sum_{\Km\in\TPhm}((\idI-\Qh)\Div\tau, (\idI-\Qh)u)_{\Km}.
    \end{aligned}  
    \end{equation*}  
    Applying the Cauchy-Schwarz inequality and the estimate in Lemma~\ref{IPhdinvBoundLemma}, we establish~\eqref{bmIpErrorFirst}.
    The error estimate \eqref{bmIpError} stems from \eqref{interErrorQh}
    and \eqref{OmmSBysused}.
\end{proof}
Recall the interpolation operator $\Pih:H^1(\Omm;\bbS)\to\Sighm$ in \eqref{PihmDef}.
Combining the interpolation error estimate \eqref{interErrorPi} and the trace theorem, we obtain
\begin{equation}
\label{intermeshnormPih}
    \|\tau-\Pih\tau\|_{0,h,m}\lesssim h^s\|\tau\|_{s,\Omm}, \quad\text{for all}~\tau\in H^s(\Omm;\bbS),
\end{equation}
with $1\leq s\leq k+1$.
Although $\Pih$ does not satisfy a commutative property analogous to \eqref{defPi}, it follows from \eqref{bmldiffMo} with $l=1$ that
\begin{equation}\label{PihmProperty}
    |\bbm(\tau-\Pih\tau,\vh)|\lesssim h\|\tau-\Pih\tau\|_{0,h,m}|\vh|_{1,h,m},
\end{equation}
for all $\tau\in H^1(\Omm;\bbS)$ and $\vh\in\Vhm$.

\begin{theorem}
    Under the hypotheses of Lemma \ref{geoError} and Theorem \ref{standErrorThe}, the following error estimate holds, 
    \begin{equation}\label{L2errorEstimate}
        \|\hatsigma-\hatsigma_h\|_{0,h,m}+|\Qh\hatu-\hatuh|_{1,h,m}\lesssim h^{\min\{k+\frac{1}{2}, m+\frac{1}{2},s\}}\|u\|_{H^{s+1}(\Omega)}.
    \end{equation}
Moreover, if $\Omega$ is convex,
\begin{align}
    \label{QhuL2ErrorEsitmate}
    \|\Qh\hatu-\hatuh\|_{0,\Omm}&\lesssim h^{\min\{k+\frac{3}{2},m+1,s+1\}}\|u\|_{H^{s+1}(\Omega)},\\
    \label{uhL2RealErrorEstimate}
    \|\hatu-\hatuh\|_{0,\Omm}&\lesssim h^{\min\{k,m+1,s+1\}}\|u\|_{H^{s+1}(\Omega)}.
\end{align}
\end{theorem}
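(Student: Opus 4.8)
The plan is to establish the three estimates in turn: \eqref{L2errorEstimate} follows from the mesh-dependent stability \eqref{Astabnorm2}, the superconvergence \eqref{QhuL2ErrorEsitmate} from an Aubin--Nitsche duality argument, and \eqref{uhL2RealErrorEstimate} from a triangle inequality combining the two.

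For \eqref{L2errorEstimate}, first I would apply \eqref{Astabnorm2} to the discrete pair $(\hatsigmah-\Pih\hatsigma,\hatuh-\Qh\hatu)\in\Sighm\times\Vhm$. Subtracting $\bbAm(\Pih\hatsigma,\Qh\hatu;\tauh,\vh)$ from the discrete equation \eqref{mixfemA} and invoking the geometric-error identity \eqref{geoerr1}---which, writing any $(\tauh,\vh)\in\Sighm\times\Vhm$ as the pull-back of $(\tauh\circ(\Psim)^{-1},\vh\circ(\Psim)^{-1})\in\Sigma\times V$, is valid for discrete test functions---yields the error equation $\bbAm(\hatsigmah-\Pih\hatsigma,\hatuh-\Qh\hatu;\tauh,\vh)=\aam(\hatsigma-\Pih\hatsigma,\tauh)+\bbm(\tauh,(\idI-\Qh)\hatu)+\bbm(\hatsigma-\Pih\hatsigma,\vh)-E(\tauh,\vh)$. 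I would then bound the four terms separately: the first by the boundedness of $\maA$ with \eqref{intermeshnormPih}; the second, the decisive non-inclusion term, by \eqref{bmIpError} together with $\|\tauh\|_{0,\OmmS}\lesssim\|\tauh\|_{0,h,m}$, producing the factor $h^{\min\{k+\frac{1}{2},s+1\}}$ that is responsible for the half-order loss; the third by the quasi-commuting property \eqref{PihmProperty}; and the last by \eqref{geoerr2} with $\|\vh\|_{0,\OmmS}\lesssim|\vh|_{1,h,m}$ from \eqref{H1NormBoundm}, giving $h^{m+\frac{1}{2}}$. Dividing by $\|\tauh\|_{0,h,m}+|\vh|_{1,h,m}$ and taking the supremum controls $\hatsigmah-\Pih\hatsigma$ and $\hatuh-\Qh\hatu$; a triangle inequality (the interpolation term $\|\hatsigma-\Pih\hatsigma\|_{0,h,m}$ being of higher order $h^{\min\{k+1,s\}}$) then gives \eqref{L2errorEstimate}.

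For \eqref{QhuL2ErrorEsitmate} I would use duality. Set $e_u:=\Qh\hatu-\hatuh$ and let $(\phi,w)\in\Sigma\times V$ solve the elasticity problem on $\Omega$ with a load chosen so that its pull-back equals $e_u$ on $\Omm$; convexity supplies the elliptic regularity $\|w\|_{2,\Omega}+\|\phi\|_{1,\Omega}\lesssim\|e_u\|_{0,\Omm}$. Pulling $(\phi,w)$ back to $(\hat{\phi},\hat{w})$ and applying \eqref{geoerr1} to this auxiliary problem, testing with $(\hattau,\hatv)=(\hatsigma-\hatsigmah,e_u)$ gives $\|e_u\|_{0,\Omm}^2=-\bbAm(\hat{\phi},\hat{w};\hatsigma-\hatsigmah,e_u)+E_d(\hatsigma-\hatsigmah,e_u)$ with $E_d$ the dual consistency error. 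Splitting $\hat{\phi}=(\hat{\phi}-\Pih\hat{\phi})+\Pih\hat{\phi}$, $\hat{w}=(\hat{w}-\Qh\hat{w})+\Qh\hat{w}$, and rewriting the interpolated part through the primal error equation tested with $(\Pih\hat{\phi},\Qh\hat{w})$, there remain the primal consistency term $E(\Pih\hat{\phi},\Qh\hat{w})$ and the non-inclusion term $\bbm(\Pih\hat{\phi},(\idI-\Qh)\hatu)$. The gains arise from pairing the order-$h$ dual approximation against the already-small primal errors: $\aam(\hat{\phi}-\Pih\hat{\phi},\hatsigma-\hatsigmah)$ contributes $h\cdot h^{\min\{k+\frac{1}{2},m+\frac{1}{2},s\}}$ via \eqref{L2errorEstimate}; $E(\Pih\hat{\phi},\Qh\hat{w})$ contributes $h^{m+\frac{1}{2}}\cdot h^{\frac{1}{2}}$ since $\|\Pih\hat{\phi}\|_{0,\OmmS}$ and $\|\Qh\hat{w}\|_{0,\OmmS}$ are of order $h^{\frac{1}{2}}$ by the boundary-layer estimate \eqref{OmmSBysused}; and $\bbm(\Pih\hat{\phi},(\idI-\Qh)\hatu)$ contributes $h\cdot h^{\min\{k+\frac{1}{2},s+1\}}$ via \eqref{bmIpError} using the $h|\Pih\hat{\phi}|_{1,\TPhm}$ branch of the minimum. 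Collecting the exponents and dividing by $\|e_u\|_{0,\Omm}$ gives \eqref{QhuL2ErrorEsitmate}.

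Finally, \eqref{uhL2RealErrorEstimate} follows from $\|\hatu-\hatuh\|_{0,\Omm}\le\|\hatu-\Qh\hatu\|_{0,\Omm}+\|\Qh\hatu-\hatuh\|_{0,\Omm}$, bounding the projection error by \eqref{interErrorQh} (rate $h^{\min\{k,s+1\}}$) and the second term by \eqref{QhuL2ErrorEsitmate}, whose minimum is $\min\{k,m+1,s+1\}$. The hard part will be the duality step: one must verify that $E_d$ and the dual non-inclusion contribution stay higher order even though convexity yields only $w\in H^2$, so the nominal $H^{5/2}$-hypothesis of Lemma \ref{geoError} is unavailable for the dual; this forces one to exploit the pairing with the small factors $\|\hatsigma-\hatsigmah\|_{0,\OmmS}$ and $\|e_u\|_{0,\OmmS}$ and the half-order boundary-layer gains from \eqref{OmmSBysused} rather than relying on dual regularity.
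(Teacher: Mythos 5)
Your proposal is correct and, for \eqref{L2errorEstimate} and \eqref{uhL2RealErrorEstimate}, coincides with the paper's argument: the same error equation obtained by subtracting $\bbAm(\Pih\hatsigma,\Qh\hatu;\cdot,\cdot)$ (the paper's \eqref{error88}), the same stability \eqref{Astabnorm2}, and the same three key bounds \eqref{bmIpError}, \eqref{PihmProperty}, and \eqref{geoerr2} upgraded to mesh-dependent norms via \eqref{H1NormBoundm}; your observation that \eqref{geoerr1} applies to discrete test functions via pull-back under $(\Psim)^{-1}$ is exactly the (implicit) step the paper uses in \eqref{standErrorImportEq}. The genuine divergence is in the duality step. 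You pose the dual problem on the physical domain $\Omega$ and pull it back, which creates a dual consistency functional $E_d$ that must be controlled with only the $H^2\times H^1$ regularity supplied by convexity --- the difficulty you correctly flag as the hard part, and which you can indeed close by pairing $E_d$ against $\|\hatsigma-\hatsigmah\|_{0,\OmmS}$ and $\|\Qh\hatu-\hatuh\|_{0,\OmmS}$ and absorbing the resulting $h^m\|\Qh\hatu-\hatuh\|_{0,\Omm}^2$ term for small $h$. The paper avoids this entirely by posing the dual problem \eqref{dualProblem} directly on $\Omm$ with the forms $\aam,\bbm$: then no dual consistency term arises, the only geometric term in the identity \eqref{PhuminsuhL2Equ} is the \emph{primal} consistency $E(z_h,w_h)$, whose bound \eqref{geoerr2} needs only $u\in H^{5/2}(\Omega)$ (guaranteed by $s\geq\frac32$), and convexity enters solely through the regularity transfer \eqref{convexProperty} via \eqref{chap::pre::L2norm}. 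So the paper's formulation buys a cleaner identity at no cost, while your route is more symmetric but carries an extra estimate. One further point your sketch leaves implicit: the duality identity also contains $\bbm(z-\Pih z,\Qh\hatu-\hatuh)$ and $\bbm(\hatsigma-\hatsigmah,w-\Qh w)$ (the paper's \eqref{EstimateBbm4} and \eqref{bmwwh}); the first is not routine, since it must be bounded through the quasi-commutativity \eqref{PihmProperty} \emph{combined with the superconvergence of} $|\Qh\hatu-\hatuh|_{1,h,m}$ already proved in \eqref{L2errorEstimate} --- a crude bound on it would forfeit the $h^{m+1}$ rate in \eqref{QhuL2ErrorEsitmate} --- so you should state that coupling explicitly when writing the argument out.
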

\begin{proof}
Subtracting $\bbAm(\Pih\hatsigma,\Qh\hatu;\hattauh,\hatvh)$ from both sides of \eqref{standErrorImportEq} and using the definition of $\bbAm$, we obtain
\begin{equation}\label{error88}
\begin{aligned}
    &\quad\ \bbAm(\Pih\hatsigma-\hatsigmah,\Qh\hatu-\hatuh;\hattauh,\hatvh)\\
    &=\aam(\Pih\hatsigma-\hatsigma,\hattauh)+\bbm(\Pih\hatsigma-\hatsigma,\hatvh)+\bbm(\hattauh,\Qh\hatu-\hatu)+E(\hattauh,\hatvh).
\end{aligned}
\end{equation}
The combination of \eqref{geoerr2} and \eqref{H1NormBoundm} bounds the following consistency error
\begin{equation*}
    |E(\hat{\tau},\hatv)|\leq Ch^{m+\frac{1}{2}}(\|\hattau\|_{0,h,m}+|\hatv|_{1,h,m})\|u\|_{H^{\frac{5}{2}}(\Omega)}.
\end{equation*}
This estimate, together with \eqref{error88}, the stability result of $\bbAm(\cdot,\cdot;\cdot,\cdot)$ in \eqref{Astabnorm2}, the bound on $\bbm(\hatsigma-\Pih\hatsigma,\hatvh)$ in \eqref{PihmProperty}, and the estimate of $\bbm(\hattauh,(\idI-\Qh)\hatu)$ in \eqref{bmIpError}, yields
\begin{equation}\label{PisigmaError}
\|\Pih\hatsigma-\hatsigmah\|_{0,h,m}+|\Qh\hatu-\hatuh|_{1,h,m}\lesssim \|\Pih\hatsigma-\hatsigma\|_{0,h,m} + h^q\|u\|_{H^{s+1}(\Omega)},
\end{equation}
with $q=\min\{k+\frac{1}{2},m+\frac{1}{2},s+1\}$.
Finally, applying inequality \eqref{PisigmaError} along with the interpolation error estimate of $\Pih$ in \eqref{intermeshnormPih} leads to the desired bound~\eqref{L2errorEstimate}.

The error estimate \eqref{QhuL2ErrorEsitmate} can be derived using the duality argument as follows. Consider the dual problem: find $(z,w)\in H(\Div,\Omm;\bbS)\times L^2(\Omm;\bbRn)$, such that
\begin{equation}\label{dualProblem}
\begin{cases}
    \aam(\tau,z)+\bbm(\tau,w)=0,& \text{for all}~\tau\in H(\Div,\Omm;\bbS),\\
    \bbm(z,v)=(\Qh\hatu-\hatuh, v)_{\Omm},& \text{for all}~v\in L^2(\Omm;\bbR^2).
\end{cases}
\end{equation}
The convexity of $\Omega$, together with \eqref{chap::pre::L2norm} and \eqref{geoerr2}, yields:
\begin{equation}\label{convexProperty}
 \|z\|_{1,\Omm}+\|w\|_{2,\Omm}\lesssim \|\Qh\hatu-\hatuh\|_{0,\Omm}.   
\end{equation}
Selecting test functions~$v=\Qh\hatu-\hatuh$ and $\tau=\hatsigma-\hatsigmah$ in \eqref{dualProblem},
we obtain:
\begin{equation}\label{PhuminsuhL2Equ}
  \begin{aligned}
    \|\Qh\hatu-&\hatuh\|_{0,\Omm}^2
    =\aam(\hatsigma-\hatsigmah, z-z_h) +\bbm(\hatsigma-\hatsigmah, w-w_h)\\
    &\qquad\quad+E(z_h,w_h)+\bbm(z-z_h,\Qh\hatu-\hatuh)-\bbm(z_h,\hatu-\Qh\hatu),
\end{aligned}  
\end{equation}
with $z_h=\Pih z$ and $w_h=\Qh w$.
It follows from the quasi-commutative property~\eqref{PihmProperty} and the superconvergence~\eqref{L2errorEstimate} that
\begin{equation*}
    |\bbm(z-z_h,\Qh\hatu-\hatuh)|\lesssim h^{q+1}\|z-\Pih z\|_{0,h,m}\|u\|_{H^{s+1}(\Omega)},
\end{equation*}
with $q=\min\{k+\frac{1}{2}, m+\frac{1}{2}, s\}$. Combining this with \eqref{intermeshnormPih} and \eqref{convexProperty}, we obtain
\begin{equation}
    \label{EstimateBbm4}
    |\bbm(z-z_h,\Qh\hatu-\hatuh)|\lesssim h^{\min\{k+\frac{5}{2}, m+\frac{5}{2}, s+2\}}\|\Qh\hatu-\hatuh\|_{0,\Omm}\|u\|_{H^{s+1}(\Omega)}.
\end{equation}
The definition of $z_h$ and the interpolation error estimate~\eqref{interErrorPi} imply
\begin{equation*}
    |z_h|_{1,\Thm}\leq\|z\|_{1,\Omm}+\|\Pih z-z\|_{1,\Thm}\lesssim\|z\|_{1,\Omm}.
\end{equation*}
This, together with \eqref{convexProperty}, \eqref{chap::pre::L2norm}, and the estimate of $\bbm(z_h,\hatu-\Qh\hatu)$ in \eqref{bmIpError}, yields
\begin{equation}\label{bmshuPhuFinal}
     |\bbm(z_h,\hatu-\Qh\hatu)|\lesssim h^{\min\{k+\frac{3}{2},m+2,s+2\}}\|\Qh\hatu-\hatuh\|_{0,\Omm}\|u\|_{H^{s+1}(\Omega)}.
\end{equation}
Applying the triangle inequality, \eqref{interErrorPi} and \eqref{OmmSBysused}, we derive
$
\|z_h\|_{0,\OmmS} 
\leq\|z_h-z\|_{0,\Omm}+\|z\|_{0,\OmmS}
\lesssim (1+h^{\frac{1}{2}})h^{\frac{1}{2}}\|z\|_{1,\Omm}
$. This result, combined with an analogous argument for $\|w_h\|_{0,\OmmS}$, the estimate of $|E(z_h,w_h)|$ in \eqref{geoerr2}, and \eqref{convexProperty}, leads to:
\begin{equation}\label{EshwhEstimate}
|E(z_h,w_h)|\lesssim h^{m+1}\|\Qh\hatu-\hatuh\|_{0,\Omm} \|u\|_{H^{\frac{5}{2}}(\Omega)}.
\end{equation}
Integrating the error estimates of $\hatsigma-\hatsigmah$ in \eqref{standError} and \eqref{L2errorEstimate}, we derive
\begin{align}
\label{amssh}
 |\aam(\hatsigma-\hatsigmah, z-z_h)|&\lesssim
h^{\min\{k+\frac{3}{2},m+\frac{3}{2},s+1\}}\|u\|_{s+1,\Omega}\|\Qh\hatu-\hatuh\|_{0,\Omm},\\
\label{bmwwh}
|\bbm(\hatsigma-\hatsigmah,w-w_h)|&\lesssim h^{\min\{k+2,m+\frac{5}{2},s+1\}}\|u\|_{s+1,\Omega}\|\Qh\hatu-\hatuh\|_{0,\Omm}.
\end{align}
Combining \eqref{PhuminsuhL2Equ}--\eqref{bmwwh}, we obtain the superconvergence result \eqref{QhuL2ErrorEsitmate}.

The error estimate \eqref{uhL2RealErrorEstimate} follows from the triangle inequality and the approximation property of~$\Qh$, which completes the proof.
\end{proof}

\begin{remark}
For sufficiently large $m,s$ (specifically, $m,s\geq k+1$),
\begin{equation*}
 \|\hatsigma-\hatsigmah\|_{0,\Omm}=\mathcal{O}(h^{k+\frac12}).   
\end{equation*} 
The result exhibits a convergence rate that is half an order lower than the optimal rate of $k+1$.
As shown in our analysis, this degradation arises from the error estimate~\eqref{bmIpError} introduced by boundary elements.
To remedy this issue, we introduce the following enriched finite element spaces.
First we define the enriched spaces on $\Omo$,
\begin{equation*}
\begin{aligned}
\widetilde{\Sigho}:=\{\sigmah& \in H(\Div, \Omo; \mathbb{S})~:~ \sigmah=\sigma_{c}+\sigma_{b}, \sigma_{c} \in H^{1}(\Omo ; \mathbb{S}), \\
\sigma_{c}&|_{\Ko} \in P_{k}(\Ko ; \mathbb{S}),\sigma_{b}|_{\Ko} \in \Sigma_{k, b}(\Ko), ~\text{for all}~ \Ko \in \TIho, \\
\sigma_{c}&|_{\Ko} \in P_{k+1}(\Ko ; \mathbb{S}),\sigma_{b}|_{\Ko} \in \Sigma_{k+1, b}(\Ko), ~\text{for all}~ \Ko \in \TPho\},
\end{aligned}
\end{equation*}
and
\begin{equation*}
\begin{aligned}
    \widetilde{\Vho}:=\{\vh \in L^{2}\left(\Omo; \mathbb{R}^{2}\right)~:~&\vh|_{\Ko} \in P_{k-1}(\Ko, \mathbb{R}^{2}), ~\text{for all}~\, \Ko \in \TIho, \\
    &\vh|_{\Ko} \in P_{k}(\Ko, \mathbb{R}^{2}), ~\text{for all}~ \Ko \in \TPho\},
\end{aligned}
\end{equation*}
recall that $\TIho,\TPho$ are the sets of interior and boundary triangles, respectively. 
Then the enriched spaces $\widetilde{\Sighm}$, $\widetilde{\Vhm}$ over $\Omm$ can be defined in the same way as in \eqref{HdivKm} and~\eqref{VhKm}, respectively.

Solving the discrete problem \eqref{mixpuredisplacement} with the enriched spaces $\widetilde{\Sighm}$ and $\widetilde{\Vhm}$, the convergence rate for $\sigmah$ in the $L^2$-norm can be recovered to the optimal order $k+1$:
\begin{equation*}
        \|\hatsigma-\hatsigma_h\|_{0,m}\lesssim h^{\min\{k+1, m+\frac{1}{2},s\}}\|u\|_{H^{s+1}(\Omega)}.
    \end{equation*}
Moreover, if $\Omega$ is convex,
\begin{align*}
    \|\Qh\hatu-\hatuh\|_{0,\Omm}&\lesssim h^{\min\{k+2,m+1,s+1\}}\|u\|_{H^{s+1}(\Omega)}.
\end{align*}
It is worth noting that this modification is limited to the boundary elements and incurs only a marginal increase in the number of degrees of freedom.
\end{remark}

\begin{remark}
The postprocessing of the displacement is a common practice in the analyses of mixed finite element methods: find $\hatush\in\Vhsm$ and $\hatphih\in\Vhm$, such that for all~$\Km\in\Thm$, satisfying
\begin{equation*}
    \begin{cases}
(\varepsilon(\hatush),\varepsilon(v))_{\Km}+(v,\hatphih)_{\Km}=(\maA\hatsigmah,\varepsilon(v))_{\Km},&\text{for all}~v\in\Vhsm|_{\Km},\\
(\hatush,\psi)_{\Km}=(\hatuh,\psi)_{\Km},&\text{for all}~\psi\in\Vhm|_{\Km},
    \end{cases}
\end{equation*}
where the space $\Vhsm$ is defined as
\begin{equation*}
    \Vhsm:=\{v\in L^2(\Omm;\bbRn)~:~v|_{\Km}\circ\FKm\in P_{k+1}(\Ko;\bbRn),~\text{for all}~\Ko \in \Tho\}.
\end{equation*}
Following the proof in \cite{chen2018fast} and \eqref{QhuL2ErrorEsitmate}, we can get
\begin{equation*}
    \|\hatu-\hatush\|_{0,\Omm}\lesssim h^{\min\{k+\frac{3}{2},m+1,s+1\}}\|u\|_{H^{s+1}(\Omega)}.
\end{equation*}
Moreover, after replacing $\Sighm,\Vhm$ and $\Vhsm$ with $\widetilde{\Sighm},\widetilde{\Vhm}$ and $\widetilde{\Vhsm}$, respectively, we can get
\begin{equation*}
    \|\hatu-\hatush\|_{0,\Omm}\lesssim h^{\min\{k+2,m+1,s+1\}}\|u\|_{H^{s+1}(\Omega)},
\end{equation*}
where the enriched space $\widetilde{\Vhsm}$ is defined as
\begin{equation*}
\begin{aligned}
    \widetilde{\Vhsm}:=\{v\in L^2(\Omm;\bbRn)~:~&v|_{\Km}\circ\FKm\in P_{k+1}(\Ko;\bbRn),~\text{for all}~\Ko \in \TIho,\\
    &v|_{\Km}\circ\FKm\in P_{k+2}(\Ko;\bbRn),~\text{for all}~\Ko \in \TPho\}.
\end{aligned}
\end{equation*}
\end{remark} 

\section{Numerical tests}\label{numericres}
This section presents the competitive relationship between polynomial degree~$k$ and geometric approximation order $m$ through numerical experiments on both unit disk and an asymmetric domain.
All meshes are generated by Gmsh~(version 4.13.1, http://gmsh.info, \cite{Geuzaine2009Gmsh}) with a successive uniform refinement scheme.
Errors including $\|\hatu-\hatuh\|_{L^2(\Omm)}$, $\|\hatu-\hatush\|_{L^2(\Omm)}$, $\|\hatsigma-\hatsigmah\|_{L^2(\Omega^m)}$ and~$\|\HatDiv\hatsigma-\HatDiv\hatsigmah\|_{L^2(\Omm)}$ are calculated over the approximate domain $\Omm$.

\subsection{Unit disk domain}
We test our results over the unit disk $\Omega=\{(x,y)~:~x^2+y^2\leq1\}$.
The exact solution is taken as
\begin{equation}\label{exactSolution}
u(x,y)=\big(e^{xy}\cos(x),~ e^y\sin(x+y)\big)^{T},
\end{equation}
and the Lam\'{e} constants are taken as $\lambda=1$, $\mu=1$. 

First, we perform the computation using the spaces $\Sighm$ and $\Vhm$.
Table~\ref{unitDisk} reports the convergence rates, computed as the slopes of linear regressions over the last three mesh levels in a sequence of uniformly refined meshes.
As expected, the convergence rates of stress in both the $L^2$-norm and the $H(\Div)$-norm are consistent with the theoretical prediction. 
Specifically, the convergence rate in the $L^2$-norm of~$\hatsigmah$ is $\min\{k+\tfrac{1}{2}, m+\tfrac{1}{2}\}$, which does not attain the optimal order $k+1$ even with $m > k$.
For $\HatDiv\hatsigmah$, the observed convergence rates are  $\min\{k, m+\tfrac{1}{2}\}$. When $m$ is odd, the convergence rates in the $L^2$-norm of both the displacement and its post-processed version align well with the theoretical expectation, which are $\min\{k, m+1\}$ and $\min\{k+\tfrac{3}{2}, m+1\}$, respectively.
In contrast, for even $m$, the convergence rates in the $L^2$-norm for both the approximate displacement and its post-processed solution exceed the corresponding theoretical rates by approximately half an order.

The right half of Table~\ref{unitDisk} presents results by the modified spaces~$\widetilde{\Sighm}$ and $\widetilde{\Vhm}$.
The primary improvement brought by this modification is that the convergence rates for~$\|\hatu - \hatush\|_{L^2}$ and $\|\hatsigma - \hatsigmah\|_{L^2}$ increase by half an order for~$m=k+1$, in full agreement with the theoretical prediction.
\begin{table}[htbp]
    \centering
    \caption{
    \centering
    The convergence rates for unit disk, where $N_T$ is the number of elements in the final mesh, $m$ and~$k$ are the order of meshes and finite element spaces, respectively.}
    \label{unitDisk}
	\begin{tabular}{ccc|cccc|cccc}
	\hline
    \hline
     \multirow{2}{*}{$N_T$} & \multirow{2}{*}{$k$} & \multirow{2}{*}{$m$} & \multicolumn{4}{|c|}{$\Sighm\times\Vhm$}&\multicolumn{4}{c}{$\widetilde{\Sighm}\times\widetilde{\Vhm}$}\rule{0pt}{3ex}\\
	&&& $\hatuh$ & $\hatush$ & $\hatsigmah$ & $\HatDiv\hatsigmah$& $\hatuh$ & $\hatush$ & $\hatsigmah$ & $\HatDiv\hatsigmah$ \\
     \hline
     \hline
     \multirow{4}{*}{14336} & $3$ & $1$ & 1.97              & 1.98                & 1.54                        & 1.51           & 2.05              & 2.05                & 1.58                        & 1.51                         \\ 
	 & $3$ & $2$ & 3.04              & {3.50}                & 2.50                        & 2.50   & 2.96              & {3.53}                & 2.50                        & 2.52                                   \\ 
	& $3$ & $3$ & 3.03              & {4.41}                & 3.51                        & 3.14     & 2.93              & 4.09                & 3.57                        & 2.94                                 \\ 
	& $3$ & $4$ & 3.03              & 4.49                & 3.51                        & 3.14             & 2.93              & 4.97                & 3.97                        & 2.93                          \\ \hline
    \multirow{5}{*}{14336}
    & $4$ & $1$ & 1.98              & 1.98                & 1.52                        & 1.51          & 2.05              & 2.05                & 1.58                        & 1.51                                              \\ 
	 & $4$ & $2$ & {3.50}              & {3.50}                & 2.50                        & 2.49                         & {3.54}              & {3.54}                & 2.50                        & 2.49                  \\ 
	 & $4$ & $3$ & 4.09              & 4.00                & 3.51                        & 3.53              & 3.97              & 4.08                & 3.52                        & 3.55                                     \\ 
   & $4$ & $4$ & 4.09              & {5.50}                & 4.49                        & 4.17                       & 3.94              & {5.68}                & 4.68                        & 3.89              \\ 
 & $4$ & $5$ & 4.09              & 5.49                & 4.49                        & 4.18         & 3.94              & 5.89                & 4.88                        & 3.88                                           \\ \hline
    \hline
	\end{tabular}
\end{table}

\subsection{Three-leaf domain}
As in \cite{arnold2020hellan}, we test on the $\Omega$ whose boundary is parametrized by
\begin{equation*}
\begin{cases}
    x(t)=[1+0.4\cos(3t)]\cos(t),\\
    y(t)=[1+(0.4+0.22\sin(t))\cos(3t)]\sin(t),
\end{cases}
\end{equation*}
with $0\leq t\leq2\pi$.
The coarsest mesh is shown in Figure \ref{fig:three_leaf2}.
It is evident that the domain is non-convex. To generate a regular mesh, we deliberately applied local refinement near the concave corners in advance.
The exact solution is taken the same as \eqref{exactSolution}.
\begin{figure}[htbp]
    \centering
    \subfloat[$m=1$, $N_T=332$.]{
    \label{fig:three_leaf2_1_0.500000}
    \includegraphics[scale=0.35]{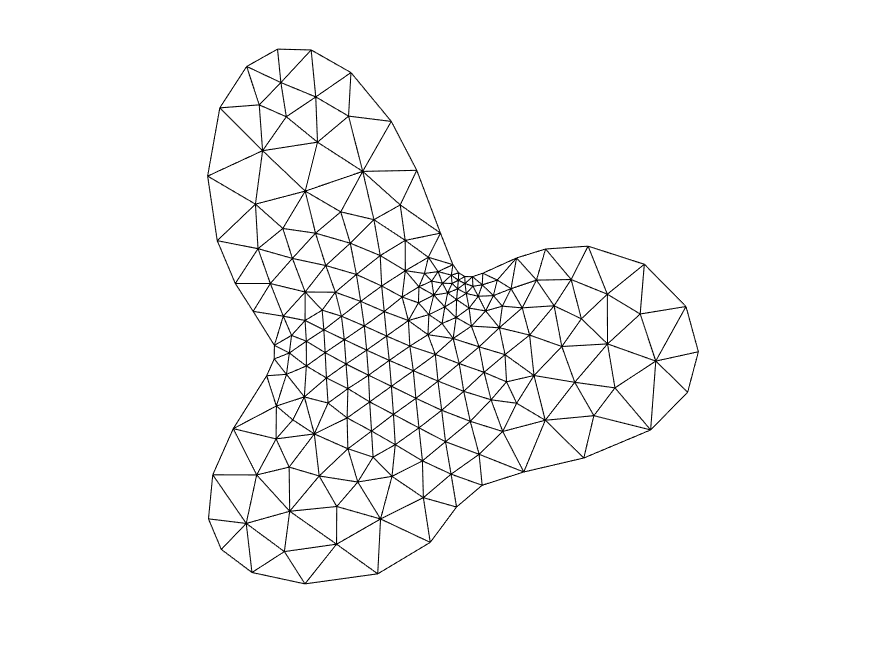}}
    \subfloat[$m=3$, $N_T=332$.]{
    \label{fig:three_leaf2_3_0.500000}
    \includegraphics[scale=0.35]{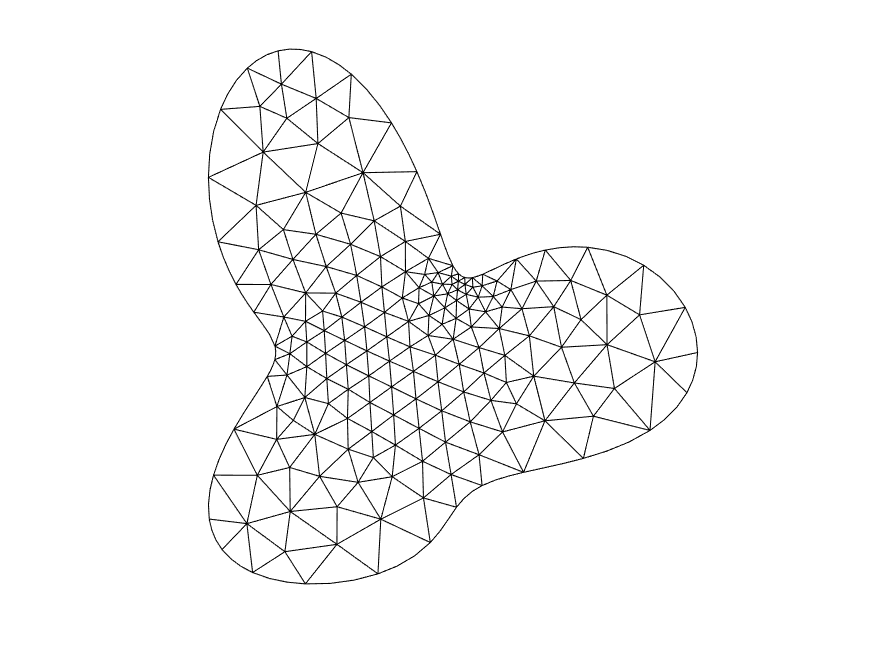}}
    \caption{Illustration of the three-leaf domain with mesh order $m=1$ (left) and $m=3$ (right).}
    \label{fig:three_leaf2}
\end{figure}
As indicated in Table~\ref{threeLeaf}, most results align with those from the first numerical example and closely match the theoretical prediction.
Minor deviations observed in a few cases may be due to the non-convexity of the domain.
Nevertheless, the method remains capable of producing reasonably accurate results even on certain non-convex domains.
\begin{table}[htbp]
    \centering
    \caption{
    \centering
    The convergence rates for three-leaf domain, where $N_T$ is the number of elements in the final mesh,~$m$ and $k$ are the order of meshes and finite element spaces, respectively.}
    \label{threeLeaf}
	\begin{tabular}{ccc|cccc|cccc}
    \hline 
    \hline
     \multirow{2}{*}{$N_T$} & \multirow{2}{*}{$k$} & \multirow{2}{*}{$m$} & \multicolumn{4}{|c|}{$\Sighm\times\Vhm$}&\multicolumn{4}{c}{$\widetilde{\Sighm}\times\widetilde{\Vhm}$}\rule{0pt}{3ex}\\
	&&& $\hatuh$ & $\hatush$ & $\hatsigmah$ & $\HatDiv\hatsigmah$& $\hatuh$ & $\hatush$ & $\hatsigmah$ & $\HatDiv\hatsigmah$ \\
    
     \hline
     \hline
     \multirow{4}{*}{84992}
      & 3 & $1$ & 1.98              & 1.98                & 1.54                        & 1.51        & 2.04              & 2.05                & 1.59                        & 1.51                                 \\
	& 3 & $2$ & 3.18              & {3.52}                & 2.50                        & 2.49      & 3.15              & {3.55}                & 2.49                        & 2.50                                \\ 
	 & 3 & $3$ & 3.13              & 4.16                & 3.53                        & {3.31}     & 2.89              & 4.08                & 3.53                        & 3.09                                   \\ 
	 & 3 & $4$ & 3.13              & 4.48                & 3.52                        & {3.32}     & 2.89              & 5.15                & 4.14                        & 2.95                                 \\ \hline
     \multirow{5}{*}{21248}
     & 4 & $1$ & 1.96              & 1.96                & 1.56                        & 1.51                 & 1.97              & 1.97                & 1.61                        & 1.51                            \\
	 & 4 & $2$ & {3.52}              & {3.52}                & 2.49                        & 2.49         & {3.54}              & {3.54}                & 2.49                        & 2.49                                 \\ 
	 & 4 & $3$ & {4.20}              & 4.02                & 3.50                        & 3.49                                    & 4.00              & 4.01                & 3.52                        & 3.47          \\ 
	 & 4 & $4$ & {4.41}              & {5.46}                & 4.48                        & {4.23}                            & 3.90              & {5.53}                & 4.48                        & {4.20}                             \\ 
	 & 4 & $5$ & {4.41}              & 5.46                & 4.47                        & {4.22}  &           3.86              & 5.99                & 5.17                        & 4.16                            \\ \hline
    \hline
	\end{tabular}
\end{table}

\section*{Conclusions}
In this article, we have extended the Hu–Zhang element to curved domains and established the discrete inf–sup conditions in both the $H(\Div,\Omega;\bbS)\times L^2(\Omega;\bbRn)$ norm and the mesh-dependent norms. 
The convergence rates of stress and displacement are demonstrated to be jointly influenced by the polynomial degree $k$ and the geometric approximation order $m$, which is further validated by two numerical experiments.
Our analysis reveals that the non-inclusion (i.e., $\Div\Sighm\not\subseteq \Vhm$) leads to the suboptimality of stress in the $L^2$-norm, which is mitigated by local $p$-enrichment on boundary elements.
For simplicity, we have restricted our discussion to the homogeneous displacement boundary condition, but the proposed analysis naturally extends to more general cases, as also indicated by our numerical results.

\section*{Acknowledgments}
The first and second authors are grateful to Professor Rui Ma from Beijing Institute of Technology for her helpful discussions and valuable suggestions in this article.

\bibliographystyle{siamplain}
\bibliography{bibfile}

\end{document}